\theoremstyle{plain}
\newtheorem{thm}{Theorem}[section]
\newtheorem{cor}[thm]{Corollary}
\newtheorem{prop}[thm]{Proposition}
\newtheorem{conj}[thm]{Conjecture}
\theoremstyle{definition}
\newtheorem{rem}[thm]{Remark}
\newtheorem{ex}[thm]{Example}
\theoremstyle{definition}
\newtheorem{defn}[thm]{Definition}
\DeclareMathOperator{\Symb}{Symb}
\def\makeautorefname#1#2{\expandafter\def\csname#1autorefname\endcsname{#2}}
\DeclareMathOperator{\id}{Id}
\DeclareMathOperator{\Ind}{Ind}
\DeclareMathOperator{\Res}{Res}
\DeclareMathOperator{\im}{Im}
\DeclareMathOperator{\GL}{GL}
\DeclareMathOperator{\refl}{Ref}
\DeclareMathOperator{\Gau}{Gau}
\DeclareMathOperator{\Irr}{Irr}
\DeclareMathOperator{\JM}{JM}
\DeclarePairedDelimiter\abs{\lvert}{\rvert}
\DeclarePairedDelimiter\ket{\lvert}{\rangle}
\definecolor{shadecolor}{gray}{0.90}
\def\boitegrise#1#2{\begin{centerline}{\fcolorbox{black}{shadecolor}{~
    \begin{minipage}[t]{#2}{\vphantom{~}#1\vphantom{$A_{\displaystyle{A_A}}$}}
            \end{minipage}~}}\end{centerline}\medskip}
\definecolor{linkcolor}{rgb}{0,0,1} 
\title[On Calogero--Moser cellular characters for $G(l,1,n)$]{On Calogero--Moser cellular characters for imprimitive complex reflection groups}
\author{Nicolas Jacon}
\address{UFR Sciences exactes et naturelles, 
Laboratoire de Math\'ematiques UMR CNRS 9008, Moulin de la Housse BP 1039
51100 Reims, France.}
\email{nicolas.jacon@univ-reims.fr}
\author{Abel Lacabanne}
\address{Laboratoire de Math\'ematiques Blaise Pascal (UMR 6620), Universit\'e Clermont Auvergne, Campus Universitaire des C\'ezeaux, 3 place Vasarely, 63178 Aubi\`ere Cedex, France}
\email{abel.lacabanne@uca.fr}
\begin{document}


\begin{abstract}
We study the relationship between Calogero--Moser cellular characters and characters defined from vectors of a Fock space of type $A_{\infty}$. Using this interpretation, we show that Lusztig's constructible characters of the Weyl group of type $B$ are sums of Calogero--Moser cellular characters. We also give an explicit construction of the character of minimal $b$-invariant of a given Calogero--Moser family of the complex reflection group $G(l,1,n)$. 
\end{abstract}


\maketitle



\section{Introduction}
\label{sec:intro}

The representation theory of finite Coxeter groups, and more precisely the Kazhdan--Lusztig theory, is of crucial importance for the understanding of various problems in Lie theory. For example, in his classification of unipotent characters of finite groups of Lie type, Lusztig has underlined the importance of a partition of irreducible characters of the associated Weyl group into families. The starting point of Kazhdan--Lusztig theory is the construction of a basis of Hecke algebras in \cite{kazhdan-lusztig}, known nowadays as the Kazhdan--Lusztig basis. From this basis, one can define partitions of the Coxeter group in the so-called right, left or two-sided cells. To every right cell is attached a module over the Hecke algebra and a character (not necessarily irreducible) of the Coxeter group. These characters are the cellular characters\footnote{The associated modules have not to be confused with the cell modules in the sense of Graham and Lehrer, even if there are closed relationships between these two objects \cite{geck-cellular}}.  Lusztig has then realized that these constructions are also possible with Hecke algebras at unequal parameters \cite{lusztig-unequal}. Many questions around Kazhdan--Lusztig theory for Hecke algebra with unequal parameters still remain open, see \cite{bonnafe-unequal} for a recent monograph about this subject.

Since the Coxeter groups are real reflection groups, one may ask the following question: does there exist a Kazhdan--Lusztig theory for finite complex reflection groups? Of course, one first need the notion of Hecke algebras for complex reflection groups. These objects have been introduced by Brou\'e--Malle--Rouquier in \cite{broue-malle-rouquier}, where presentations in terms of braid relations are also given. In the particular case of the complex reflection group $G(l,1,n)$, they recover an algebra already introduced in \cite{ariki-koike}, nowadays known as the Ariki--Koike algebras. Unfortunately, we do not know a basis of the Hecke algebra associated with a finite complex reflection group which has similar properties as the Kazhdan--Lusztig basis for Hecke algebras associated with Coxeter groups.

But this story does not stop there. Rouquier \cite{rouquier-blocs} managed to give an interpretation of the Kazhdan--Lusztig families of cristallographic Coxeter groups in terms of blocks of the Hecke algebra, which extends to the complex reflection group case. These Rouquier families have been computed for all finite complex reflection groups \cite{chlouveraki}. The next major step in a tentative of the generalization of the Kazhdan--Lusztig theory has been made by Gordon, via the study of the representation theory of restricted rational Cherednik algebras at $t=0$ \cite{gordon-baby}. He introduced the notion of a  Calogero--Moser family, which is a partition of the irreducible characters of a given complex reflection group. If the complex reflection group is a Coxeter group, it is worth to notice that the construction of the Calogero--Moser families and the Kazhdan--Lusztig families are very different. But it has been shown, via explicit computations on both sides, that these two notions coincide for most finite irreducible Coxeter groups \cite[Theorem 2.4]{bellamy-thiel}.

Finally, Bonnaf\'e and Rouquier \cite{bonnafe-rouquier} defined the notion of Calogero--Moser right, left and two-sided Calogero--Moser cells by studying the geometry of the Calogero--Moser space associated with a complex reflection group. Their approach is partially motivated by the ideas of Gordon in which  the rational Cherednik algebra at $t=0$ plays an essential role. To each right cell is also associated a Calogero--Moser cellular character. If the complex reflection group is a Coxeter group, they conjecture that the various output of this Calogero--Moser theory is the same as the output of the Kazhdan--Lusztig theory (families, cells, cellular characters). However, only the some particular cases are well understood: the symmetric group \cite{brochier-gordon-white}, partially the dihedral groups \cite{bonnafe-cellular-dihedral,bonnafe-germoni} or some exemples in small ranks \cite[Chapter 18 and 19]{bonnafe-rouquier}.

In this paper, we are mainly interested in the case of the complex reflection group $G(l,1,n)$ and more specifically in the cellular characters for $G(l,1,n)$. The second author has stated in \cite{lacabanne} some conjectures relating these Calogero--Moser cellular characters with the expressions of the canonical basis of the level $l$ Fock space for $U_q(\mathfrak{gl}_{\infty})$. Thanks to this interpretation of constructible characters for the Weyl group of type $B_n$ settled by Leclerc and Miyachi \cite{leclerc-miyachi}, one of our main result is the following, see \cref{thm:constructible_CM} for the precise details on the parameters involved.

\begin{thm}
  Lusztig's constructible characters for the Weyl group of type $B_n$ are sums of Calogero--Moser cellular characters.
\end{thm}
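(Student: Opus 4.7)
The plan is to place both families of characters inside the level-$l$ Fock space $\cF$ of type $A_\infty$ --- the central object announced in the abstract as the unifying tool --- and to read off the decomposition from a positive expansion in $\cF$.

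First, I would invoke the dictionary identifying each Calogero--Moser cellular character of $G(l,1,n)$ with the character attached to a distinguished vector of $\cF$ (an Uglov-type canonical basis vector), as set up earlier in the paper following \cite{lacabanne}. Specialising to $l=2$, I would then apply \cite{leclerc-miyachi} to realise Lusztig's constructible characters of $W(B_n)$ as the characters attached to analogous canonical basis vectors of a level~$2$ Fock space, with the Hecke multiparameter of type $B$ translated into a choice of charge and quantum level $e$. A preliminary bookkeeping step is to match parameter conventions, translating the Hecke multiparameter used by Leclerc--Miyachi into the Cherednik parameter, together with its charge, on the Calogero--Moser side; this match is guided by the Bonnafé--Rouquier dictionary for type $B$ and should identify the ambient Fock spaces used on each side.

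Once both sides are realised by vectors of the same Fock space, the theorem reduces to showing that the vectors representing constructible characters expand with non-negative integer coefficients in the vectors representing CM cellular characters. The main obstacle is precisely this positivity. The two distinguished bases that appear --- the $A_\infty$ canonical basis on the CM side, and the affine $\widehat{\mathfrak{sl}}_e$ canonical basis on the Leclerc--Miyachi side --- coexist on $\cF$, and the transition between them must be shown to be non-negative even after the specialisation $v=1$ needed to pass from Fock space vectors to honest characters. The positivity should follow from Uglov's explicit realisation of both bases on the same space of multipartitions, together with a triangularity statement against the standard basis that controls the sign of the transition coefficients.

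Should the direct Fock space argument prove too delicate for certain parameter regimes, a fallback is to work combinatorially: compare the indexation of constructible characters by type $B$ symbols with the indexation of CM cellular characters by bipartitions, and check symbol by symbol that the constructible character attached to a symbol is the sum of the CM cellular characters attached to the bipartitions with that symbol. This would reduce the theorem to a finite combinatorial identity verified separately on each Calogero--Moser family.
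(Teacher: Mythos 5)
Your opening step is exactly where the argument breaks down: the ``dictionary identifying each Calogero--Moser cellular character of $G(l,1,n)$ with a canonical basis vector'' is not a result available in the paper or in \cite{lacabanne} --- it is precisely the open conjecture stated at the end of \cref{sec:monomial-bases}. What \emph{is} proven (\cref{prop:JM_sums_of_CM}) is weaker and of a different shape: the Jucys--Murphy cellular characters, which by \cref{thm:JM_monomial} are the $q=1$ evaluations of \emph{quasimonomial} vectors (iterated applications of the $F_i$, no divided powers), are \emph{sums} of Calogero--Moser cellular characters. Because you assume the conjectural identification, your whole positivity discussion is both unnecessary and insufficient: even if one proved non-negativity of the transition between the $A_\infty$ and affine canonical bases (which in any case does not follow from triangularity against the standard basis --- triangularity gives unipotence, not positivity), this would still not produce a decomposition into CM cellular characters without the conjecture. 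The paper's actual bridge is different and is the technical heart of the proof: starting from Leclerc--Miyachi's statement (\cref{thm:LM}) that in level $2$ every canonical basis vector has the form $F_{i_k}^{(r_k)}\cdots F_{i_1}^{(r_1)}\cdot v_{S^0}$ with $r_j\in\{1,2\}$, one shows (\cref{prop:simplify_canonical} and \cref{cor:canonical_is_monomial}) that the divided powers can be eliminated using the quantum Serre relation $F_{i}^{(2)}F_{j} = F_{i}F_{j}F_{i} - F_{j}F_{i}^{(2)}$ together with the vanishing of $F_i^{(2)}$ on the relevant vectors (controlled through the symbols $\binom{\beta}{\gamma}$). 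Hence each canonical basis vector is a genuine monomial vector, its $q=1$ evaluation is a Jucys--Murphy cellular character, and \cref{prop:JM_sums_of_CM} finishes the argument --- no basis-transition positivity is ever needed.

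You also miss the equal-parameter case $r=0$ (multicharge $(s,s)$), where the reduction to monomial vectors genuinely fails: the canonical basis vectors $v_S$ with $S=\binom{\beta}{\beta}$ are not monomial, and the corresponding characters $\chi_{(\lambda,\lambda)}$ require a separate argument --- the paper shows each such character is alone in its Calogero--Moser family (via the content of its symbol and \cite{Martino}) and is therefore itself a CM cellular character by \cite[Proposition 7.7.1 and Theorem 13.5.1]{bonnafe-rouquier}. Finally, your combinatorial fallback is not viable as stated: checking ``symbol by symbol that the constructible character is the sum of the CM cellular characters attached to the bipartitions with that symbol'' presupposes an explicit description of the CM cellular characters inside each family, which is exactly the unknown quantity; moreover the characters in a family do not simply decompose according to its bipartitions, so there is no finite identity to verify.
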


Our second main result concerns the computation of certain remarkable elements in the Calogero--Moser families. We  provide an explicit proof of a result concerning the $b$-invariants of the characters of a Calogero--Moser family. Namely, it has been proven that for every such family, there exists a unique irreducible character of minimal invariant among the family \cite[Theorem 7.4.1(b)]{bonnafe-rouquier}. We here give another proof of this result, in the specific case of $G(l,1,n)$, and explicitly provide a description in terms of $l$-symbols of the character of minimal $b$-invariant in a Calogero--Moser family. We refer to \cref{sec:b-invariant} for more details, and especially to \cref{thm:minimal_binv}.

\medskip

The organisation of the paper is as follows. In \cref{sec:cm-characters}, we review the definition of Calogero--Moser cellular characters via Gaudin algebras, as in \cite[Chapter 8]{bonnafe-rouquier}, and focus then in the specific case of the complex reflection group $G(l,1,n)$. \cref{sec:monomial-bases} concerns the Fock spaces, and  the quasimonomial vectors, that we relate to some approximations of the Calogero--Moser cellular characters. Then, in \cref{sec:typeB}, we study in details the case of the Weyl group of type $B$ and the relationship between Calogero--Moser cellular characters and Lusztig's constructible characters.  Before the last section, we go back to the case of the complex reflection group $G(l,1,n)$ and describe the symbol of minimal $b$-invariant in a Calogero--Moser family using the combinatorics of $l$-symbols. Finally, we end in \cref{sec:conjectures} with some various conjectures, mostly related with the $b$-invariant.\\
\\
{\bf Acknowledgements.} A.L. is supported by a PEPS JCJC grant from INSMI (CNRS), N.J. is supported by ANR AHA 18-CE40-0001  and ANR CORTIPOM  21-CE40-001. The authors would like to thank C\'edric Bonnaf\'e, Maria Chlouveraki, J\'er\'emie Guilhot and  Dario Mathia for useful discussions.



\section{Calogero--Moser cellular characters}
\label{sec:cm-characters}

The Calogero--Moser cellular characters are a conjectural extension to complex reflection groups of cellular characters in Kazhdan--Lusztig theory. The definition of cells usually requires the study of representation theory of Cherednik algebras, but we will use the definition of the Calogero--Moser characters using the notion of Gaudin algebras. We recall now their definition and then focus on the case of $G(l,1,n)$.

\subsection{Set-up}
\label{sec:set-up}

Let $V$ be a finite dimensional $\mathbb{C}$-vector space with dual $V^*$. The duality pairing is denoted by $\langle\cdot,\cdot\rangle\colon\! V\times V^* \rightarrow \mathbb{C}$ and the determinant map by $\det \colon\! \GL(V)\rightarrow \mathbb{C}^*$.

Let $W \subset \GL(V)$ be a complex reflection group. We denote by $\refl(W)$ the set of pseudo-reflections of $W$, that is elements of $W$ which fix an hyperplane. For each $s\in \refl(W)$, we choose $\alpha_s \in V^*$ and $\alpha_s^\vee\in V$ such that
\[
  \ker(s-\id) = \ker(\alpha_s)\quad\text{and}\quad\im(s-\id) = \mathbb{C}\alpha_s^\vee.
\]
In other terms, we have $s(v) = v - (1-\det(s))\frac{\langle v,\alpha_s\rangle}{\langle \alpha_s^\vee,\alpha_s\rangle}\alpha_s^\vee$ for any $v\in V$.

We denote by $\Irr(W)$ the set of irreducible complex characters of $W$ and for any $\chi \in \Irr(W)$, we fix $V_\chi$ a simple representation of $W$ affording the character $\chi$.

Finally, we also fix a function $c\colon\! \refl(W) \rightarrow \mathbb{C},\ s\mapsto c_s$ which is invariant by conjugation of $W$ on $\refl(W)$.

\subsection{Definition via Gaudin algebras}
\label{sec:def-gaudin}

For any $x \in V$, we consider the following element of $\mathbb{C}(V)[W]$
\[
  \mathcal{D}_x = \sum_{s\in \refl(W)}c_s\det(s)\frac{\langle x, \alpha_s\rangle}{\alpha_s}s.
\]

\begin{defn}
  The Gaudin algebra with parameter $c$, denoted by $\Gau_c(W)$ is the $\mathbb{C}(V)$-subalgebra of $\mathbb{C}(V)[W]$ generated by $(\mathcal{D}_x)_{x\in V}$.
\end{defn}

Since $x\mapsto \mathcal{D}_x$ is clearly linear, the Gaudin algebra $\Gau_c(W)$ is generated by $(\mathcal{D}_x)_{x \in B}$, where $B$ is a basis of $V$.

\begin{prop}[{\cite[Proposition 8.3.1]{bonnafe-rouquier}}]
  For any $x,y \in V$, the elements $\mathcal{D}_x$ and $\mathcal{D}_y$ commute in $\mathbb{C}(V)[W]$. Therefore $\Gau_c(W)$ is a commutative subalgebra of $\mathbb{C}(V)[W]$.
\end{prop}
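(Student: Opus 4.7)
The plan is to prove commutativity by a direct computation of $[\mathcal{D}_x, \mathcal{D}_y]$ in $\mathbb{C}(V)[W]$, followed by an analysis of the coefficient of each group element. The fundamental move is the twisted commutation
$$s \cdot f = s(f)\cdot s \qquad (f \in \mathbb{C}(V),\ s \in W),$$
where $s$ acts on $\mathbb{C}(V)$ through its action on $V^*$. Applying this to expand the products gives
$$\mathcal{D}_x \mathcal{D}_y = \sum_{s,t \in \refl(W)} c_s c_t \det(s)\det(t)\, \frac{\langle x,\alpha_s\rangle \langle y,\alpha_t\rangle}{\alpha_s\cdot s(\alpha_t)}\, st,$$
and a symmetric expression for $\mathcal{D}_y \mathcal{D}_x$.

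Taking the difference, the diagonal terms $s = t$ die immediately: the denominator is symmetric in $(x,y)$ while the numerator becomes $\langle x,\alpha_s\rangle\langle y,\alpha_s\rangle - \langle y,\alpha_s\rangle\langle x,\alpha_s\rangle = 0$. For the off-diagonal terms, I would group the contributions by their group-theoretic product $g = st$. Thus I want to show, for each $g \in W$, that
$$\sum_{\substack{(s,t)\in\refl(W)^2\\ st = g}} c_s c_t\det(s)\det(t)\,\frac{\langle x,\alpha_s\rangle\langle y,\alpha_t\rangle - \langle y,\alpha_s\rangle\langle x,\alpha_t\rangle}{\alpha_s\cdot s(\alpha_t)} = 0.$$
The numerator is bilinear-antisymmetric in the pair $(s,t)$, which suggests pairing each term with its $(t,s)$-counterpart.

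The crucial step is a partial fraction identity. Working inside the rank-two parabolic subgroup $W_{s,t} = \langle s,t\rangle$, one expands
$$\frac{1}{\alpha_s \cdot s(\alpha_t)} \quad\text{and}\quad \frac{1}{\alpha_t\cdot t(\alpha_s)}$$
as sums of simple fractions supported on the hyperplanes of $W_{s,t}$. Using the relation $s(\alpha_t) = \alpha_t - (1-\det(s))\langle \alpha_t^\vee,\alpha_s\rangle\langle\alpha_s^\vee,\alpha_s\rangle^{-1}\alpha_s$ and its counterpart with $s,t$ swapped, the antisymmetric numerator causes matched pairs to cancel; the remaining residues at each hyperplane reassemble, up to the $\det$ and $c_\bullet$ factors, into the coefficient of $g$ viewed inside the rank-two subgroup. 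This reduces the problem to the case of groups of rank at most two.

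The main obstacle is organising this combinatorial cancellation uniformly, because two reflections can generate rank-two complex reflection groups beyond the dihedral ones (the groups $G(de,e,2)$ and exceptional rank-two groups), and in each case one must check that the contributions on each hyperplane sum to zero. Once the identity is verified inside every rank-two parabolic, the global vanishing follows because every pair $(s,t)$ lies in such a parabolic and the denominators $\alpha_s\cdot s(\alpha_t)$ involve only the linear forms attached to hyperplanes of $W_{s,t}$. This yields $[\mathcal{D}_x,\mathcal{D}_y] = 0$ and hence the commutativity of $\Gau_c(W)$.
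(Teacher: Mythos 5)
The paper itself offers no proof of this proposition: it is quoted directly from Bonnaf\'e--Rouquier, and in that reference commutativity is not obtained by the computation you attempt but structurally, from the rational Cherednik algebra at $t=0$ --- the elements $\mathcal{D}_x$ are images of the pairwise commuting elements of $V\subset \mathbf{H}_0$ under a Dunkl-type embedding into $\mathbb{C}(V)[W]$, commutativity of $V$ inside $\mathbf{H}_0$ being part of the PBW theorem. So your proposal must stand on its own as a direct argument, and while its opening moves are correct --- the expansion via $s\cdot f = ({}^sf)\, s$, the cancellation of the diagonal terms $s=t$, and the reduction to the vanishing, for each fixed $g\in W$, of
\[
  \sum_{\substack{(s,t)\in\refl(W)^2\\ st=g}} c_sc_t\,\frac{\langle x,\alpha_s\rangle\langle y,\alpha_t\rangle-\langle y,\alpha_s\rangle\langle x,\alpha_t\rangle}{\alpha_s\cdot s(\alpha_t)}
\]
--- it does not actually prove this vanishing, which is the entire mathematical content of the proposition.

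Concretely, you reduce the identity to a statement inside rank-two parabolic subgroups, note that it then requires a verification in every rank-two complex reflection group (the groups $G(de,e,2)$ and the exceptional ones), call this ``the main obstacle'', and then simply assume it resolved (``once the identity is verified inside every rank-two parabolic\dots''). A proof that postulates its key lemma proves nothing. Two further points. First, the reduction itself is stated too loosely: all pairs with $st=g$ lie in one common rank-two parabolic only when $V^g$ has codimension $2$ (they then lie in the pointwise stabilizer of $V^g$, a reflection group by Steinberg's theorem); when $g$ is a reflection or $g=1$ the pairs are spread over several parabolics, and one needs the extra observation that pairs with $\ker\alpha_s=\ker\alpha_t$ contribute zero term by term (proportional linear forms kill the antisymmetric numerator) before the remaining pairs can be grouped by the codimension-two subspace $\ker\alpha_s\cap\ker\alpha_t$. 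Also, pairing $(s,t)$ with $(t,s)$ does not respect the coefficient of $g$, since $ts\neq st$ in general; the correct involution on pairs with $st=g$ is $(s,t)\mapsto(t,t^{-1}st)$. Second, there is a standard way to finish without classifying rank-two groups: the displayed coefficient is a homogeneous rational function on $V$ of degree $-2$ whose only possible poles are simple poles along reflection hyperplanes; a residue computation shows these poles cancel, and a polynomial of negative degree is zero. Either supply that argument, carry out the rank-two verification in full, or simply deduce the result from the PBW theorem via the Dunkl embedding, as the cited source does.
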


Even though $\Gau_c(W)$ is commutative, its representation theory is far form being well-understood: it depends heavily on $c$ and the field $\mathbb{C}(V)$ is almost never a splitting field. Some simple representations are not absolutely simple and therefore a simple representation of $\Gau_c(W)$ is not automatically of dimension $1$.

\begin{defn}
  \label{def:cm-char}
  Let $L$ be a simple representation of $\Gau_c(W)$. The Calogero--Moser cellular character associated with $L$ is
  \[
    \gamma_L^{\mathrm{CM}} = \sum_{\chi\in \Irr(W)}\left[\Res_{\Gau_c(W)}^{\mathbb{C}(V)[W]}(\mathbb{C}(V)\otimes_{\mathbb{C}}V_\chi)\colon L\right]\chi.
  \]
\end{defn}

The name Calogero--Moser cellular characters comes from another definition of these characters, which is related with the Calogero--Moser partition of $W$ into cells, see \cite{bonnafe-rouquier}. 

\subsection{The case of $G(l,1,n)$ and Jucys--Murphy elements}
\label{sec:gl1n}

We now focus on the case of the complex reflection group $G(l,1,n)$. We fix $\zeta=\exp(2i\pi/l)$, $\mu_l=\langle\zeta\rangle$ the group of $l$-th root of unity in $\mathbb{C}^*$ and suppose that $V$ is of dimension $n$ with basis $(y_i)_{1 \leq i \leq n}$ and dual basis $(x_i)_{1 \leq i \leq n}$. Identifying $\GL(V)$ with $\GL_n(\mathbb{C})$ through the basis $(y_i)_{1\leq i \leq n}$, the group $G(l,1,n)$ consist of monomial matrices with entries in $\mu_l$. By convention, $G(l,1,0)$ will denote the trivial group.

We denote by $s_{i,j}$ the reflection exchanging $y_i$ and $y_j$ and fixing $y_k$ for $k\not\in\{i,j\}$ and by $\sigma_i$ the reflection fixing $y_k$ for $k\neq i$ and sending $y_i$ to $\zeta y_i$. The group $G(l,1,n)$ is then generated by $\left\{s_{i,i+1}\ \middle\vert\ 1 \leq i < n\right\}\cup\{\sigma_1\}$. The reflections of $G(l,1,n)$ fall into $l$ conjugacy classes, namely $S_0 = \left\{\sigma_j^rs_{i,j}\sigma_j^{-r}\ \middle\vert 1\leq i\neq j \leq n, 0 \leq r < l\right\}$ and $S_1,\ldots,S_{l-1}$ given by $S_k=\left\{\sigma_i^k\ \middle\vert 1 \leq i \leq n\right\}$. Finally, we set $c_i=c_{\vert S_i}$ as a shorthand for the parameter $c$. Another set of parameters appears also in the context of Cherednik and Gaudin algebras, namely $k_0,\ldots,k_{l-1}$ given by
\[
  k_i = \frac{1}{l}\sum_{j=1}^{l-1}\zeta^{j(1-i)}c_j.
\]
We will consider the indices modulo $l$ and note that $k_0+\cdots+k_{l-1}=0$.

The following Jucys--Murphy elements have been introduced by the second author in \cite{lacabanne}
\[
  J_i = \sum_{\substack{s \in \refl(G(l,1,i))\\s \not\in \refl(G(l,1,i-1))}}c_s\det(s)s.
\]
These elements generate a commutative subalgebra $\JM_c(l,n)$ of $\mathbb{C}[G(l,1,n)]$. Similarly to \cref{def:cm-char}, we define the Jucys--Murphy cellular characters.

\begin{defn}
    \label{def:jm-char}
  Let $L$ be an irreducible representation of $\JM_c(l,n)$. The Jucys--Murphy cellular character associated with $L$ is
  \[
    \gamma_L^{\mathrm{JM}} = \sum_{\chi\in \Irr(W)}\left[\Res_{\JM_c(l,n)}^{\mathbb{C}[G(l,1,n)]}(V_\chi)\colon L\right]\chi.
  \]
\end{defn}

The Jucys--Murphy cellular characters approximate the Calogero--Moser cellular characters in the following sense:

\begin{prop}[{\cite[Theorem 1.10]{lacabanne}}]
  \label{prop:JM_sums_of_CM}
  Every Jucys--Murphy cellular character is a sum of Calogero--Moser cellular characters.
\end{prop}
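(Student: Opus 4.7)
The plan is to realise the Jucys--Murphy algebra as an asymptotic degeneration of the Gaudin algebra along a well-chosen curve in $V$, and then transfer the resulting comparison of simple modules to the cellular characters.

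First I would establish the computational heart of the argument: the Jucys--Murphy elements arise as leading terms of rescaled Gaudin operators along a generic curve. Parametrise a curve in $V$ by $v(t) = \sum_{p=1}^{n} t^{p-1} y_p$ and define the rescaled operators $\widetilde{\mathcal{D}}_p(t) = t^{p-1}\, \mathcal{D}_{y_p}\big|_{x_q = t^{q-1}} \in \mathbb{C}[t,t^{-1}][G(l,1,n)]$. Separating the reflections of $G(l,1,n)$ according to the conjugacy classes $S_0, S_1, \ldots, S_{l-1}$, a direct computation shows that contributions coming from reflections involving some index $q > p$ carry a factor of the form $(1 - \zeta^{-r} t^{q-p})^{-1}$ and vanish as $t \to \infty$, whereas the contributions of $\sigma_p^k$ and of the reflections mixing $p$ with some index $i < p$ survive and together reconstitute $J_p$. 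Hence $\lim_{t \to \infty} \widetilde{\mathcal{D}}_p(t) = J_p$ in $\mathbb{C}[G(l,1,n)]$.

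Next I would package this into a flat family of commutative algebras. Let $\mathcal{A}$ be the $\mathbb{C}[t,t^{-1}]$-subalgebra of $\mathbb{C}[t,t^{-1}][G(l,1,n)]$ generated by $\widetilde{\mathcal{D}}_1(t), \ldots, \widetilde{\mathcal{D}}_n(t)$; it is commutative by the commutativity of the Gaudin algebra. Its generic fibre $\mathcal{A}_{\mathrm{gen}}$ is the image of $\Gau_c(G(l,1,n))$ under the curve specialisation $\mathbb{C}(V) \to \mathbb{C}(t)$, $x_q \mapsto t^{q-1}$; since $v(t)$ is not contained in any reflection hyperplane, this map is injective, and the simple modules of $\mathcal{A}_{\mathrm{gen}}$ encode the Calogero--Moser cellular characters. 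The fibre at $t = \infty$ is a commutative subalgebra $\mathcal{A}_\infty$ of $\mathbb{C}[G(l,1,n)]$ containing $\JM_c(l,n)$, by the previous step. Upper semi-continuity of simple multiplicities in a flat family of commutative finite-dimensional algebras then ensures that, for every $\chi \in \Irr(G(l,1,n))$, the composition series of $V_\chi$ over the generic fibre refines its composition series over $\mathcal{A}_\infty$, which in turn refines the one over $\JM_c(l,n)$. Summing multiplicities along these refinements produces, for each simple $\JM_c(l,n)$-module $L$, a collection of simple Gaudin modules whose Calogero--Moser cellular characters sum to $\gamma_L^{\mathrm{JM}}$.

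The main obstacle is to make the multiplicity comparison across the degeneration fully rigorous: the field $\mathbb{C}(V)$ is not in general a splitting field for $\Gau_c(G(l,1,n))$, so the simple Gaudin modules need not be absolutely simple, and one must keep track of endomorphism rings and Galois twists throughout the specialisation. I would address this by extending scalars to a common splitting field, such as $\mathbb{C}((t^{-1/N}))$ or its algebraic closure for a suitably large $N$, and by appealing to the Bonnafé--Rouquier cellular-character formalism, which is tailored precisely to compare commutative subalgebras of a fixed group algebra via their simple modules.
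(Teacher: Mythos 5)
First, a caveat on the comparison: the paper itself does not prove this proposition at all --- it is imported verbatim from \cite{lacabanne}, Theorem~1.10 --- so your attempt can only be measured against the proof in that reference, which does proceed along your lines: exhibit the Jucys--Murphy elements as limits of rescaled Gaudin operators along a curve, and transfer multiplicities through the degeneration. Your computational heart is correct: taking $\alpha_{\sigma_p^k}=x_p$ and $\alpha_{\sigma_j^rs_{i,j}\sigma_j^{-r}}=x_j-\zeta^rx_i$, the operator $x_p\mathcal{D}_{y_p}$ restricted to $x_q=t^{q-1}$ has constant $\sigma_p^k$-terms, its terms mixing $p$ with an index $i<p$ tend to $-c_0\,\sigma_p^rs_{i,p}\sigma_p^{-r}$, and its terms mixing $p$ with an index $q>p$ tend to $0$, so the limit at $t\to\infty$ is exactly $J_p$. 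The concluding transfer of multiplicities (each special-fibre simple picks up non-negative contributions from generic-fibre simples, whence $\gamma_L^{\mathrm{JM}}$ is a non-negative integer combination of the generic cellular characters) is also the right mechanism.

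The genuine gap is your treatment of the generic fibre. You assert that the specialisation $\mathbb{C}(V)\to\mathbb{C}(t)$, $x_q\mapsto t^{q-1}$, is injective and that the simple modules of $\mathcal{A}_{\mathrm{gen}}$ \emph{encode the Calogero--Moser cellular characters}. The field map is certainly not injective, and no injectivity statement would give what you need anyway: under specialisation, the decomposition of $\mathbb{C}(V)\otimes V_\chi$ over $\Gau_c(W)$ can both coarsen (eigenvalues may collide on this particular curve) and refine (simple modules which are not absolutely simple may split over the residue field), so the $\mathcal{A}_{\mathrm{gen}}$-cellular characters need not coincide with the Calogero--Moser ones. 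What you actually need --- and all you need --- is the weaker statement that each $\mathcal{A}_{\mathrm{gen}}$-cellular character is itself a sum of Calogero--Moser cellular characters. That is a \emph{second} degeneration statement, and it is not covered by your $t\to\infty$ step: the curve has codimension $n-1$ in $V$, so the passage from $\mathbb{C}(V)$ to $\mathbb{C}(t)$ is not governed by a discrete valuation ring; one needs a composite (higher-rank) valuation, or a chain of divisorial specialisations through a flag of subvarieties containing the curve, together with decomposition maps over valuation rings at each stage --- precisely the Bonnaf\'e--Rouquier input that the cited proof invokes, and which must be stated rather than presumed. Two lesser points: your $\widetilde{\mathcal{D}}_p(t)$ are not Laurent polynomials (their coefficients have poles where $t^{q-p}=\zeta^r$), so the family should be formed over the local ring of $\mathbb{P}^1$ at $t=\infty$ rather than over $\mathbb{C}[t,t^{-1}]$; and the splitting-field worry in your last paragraph is a non-issue, since cellular characters are insensitive to scalar extension (each simple module in a Galois orbit over the extended field occurs in every $V_\chi$ with the same multiplicity as the original simple, hence yields the same character), so the correct bookkeeping device is the decomposition map itself, not an extension to $\mathbb{C}((t^{-1/N}))$.
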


We now aim to give an inductive characterization of these Jucys--Murphy cellular characters. We first recall the combinatorics of the $l$-partitions of $n$ which govern the representation theory of $G(l,1,n)$, see \cite[Section 5.1]{geck-jacon}.

A partition $\lambda$ of $n$ is a decreasing sequence of non-negative integers $\lambda_1\geq \lambda_2\geq\cdots\geq \lambda_k \geq \cdots$ such that $\lambda_i=0$ for $i\gg 0$ and $\abs{\lambda}:=\sum_{i=1}^\infty\lambda_i=n$. A $l$-partition of $n$ is then a $l$-tuple of partitions $(\lambda^{(1)},\ldots,\lambda^{(l)})$ such that $\sum_{i=1}^l\abs{\lambda^{(i)}} = n$.

The isomorphism classes of irreducible representations of $G(l,1,n)$ are parametrized by the set of $l$-partitions of $n$. For every $l$-partition $\lambda$ of $n$, we denote by $V_\lambda$ the corresponding irreducible representation and by $\chi_\lambda$ its character.

Restriction and induction can be described using by the Young graph of $l$-partitions of $n$. Given a $l$-partition $\lambda$, its Young diagram $[\lambda]$ is the set
\[
  \left\{(a,b,c)\in \mathbb{Z}_{>0}\times\mathbb{Z}_{>0}\times\{1,\ldots,l\}\ \middle\vert \ 1 \leq b \leq \lambda_a^{(c)}\right\}
\]
whose elements are called boxes. A box $\gamma$ of the Young diagram of a $l$-partition $\lambda$ of $n$ is said to be removable if $[\lambda]\backslash \{\gamma\}$ is the Young diagram of a $l$-partition $\mu$ of $n-1$. The box $\gamma$ is also said to be addable to the Young diagram of $\mu$.

\begin{prop}[{\cite[Proposition 5.1.8]{geck-jacon}}]
  Let $\lambda$ be a $l$-partition of $n$. Then
  \[
    \Res_{G(l,1,n-1)}^{G(l,1,n)}(V_\lambda) = \oplus_{\mu}V_\mu
  \]
  where the sum is taken over the $l$-partitions $\mu$ of $n-1$ whose Young diagram is obtained by removing a box of $[\lambda]$ and
  \[
    \Ind_{G(l,1,n)}^{G(l,1,n+1)}(V_\lambda) = \oplus_{\mu}V_\mu
  \]
  where the sum is taken over the $l$-partitions $\mu$ of $n+1$ whose Young diagram is obtained by adding a box to $[\lambda]$.
\end{prop}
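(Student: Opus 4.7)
The plan is to exploit the wreath product structure $G(l,1,n) = \mu_l \wr \mathfrak{S}_n$ and reduce the statement to the classical Young branching rule for symmetric groups via Mackey's double coset formula. Induction up will then be extracted from restriction down by Frobenius reciprocity.

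First I would recall the standard realization of the simple modules. For an $l$-partition $\lambda = (\lambda^{(1)},\ldots,\lambda^{(l)})$ of $n$, set $n_i = |\lambda^{(i)}|$, fix a partition of $\{1,\ldots,n\}$ into blocks $B_1 \sqcup \cdots \sqcup B_l$ with $|B_i|=n_i$, and let $H_\lambda = G(l,1,n_1)\times\cdots\times G(l,1,n_l)$ be the corresponding Young-type subgroup. Let $\eta_i$ be the linear character of $G(l,1,n_i)$ trivial on the $\mathfrak{S}_{n_i}$ factor and sending each generator $\sigma_j$ to $\zeta^{i-1}$. Pulling back the Specht module $S^{\lambda^{(i)}}$ of $\mathfrak{S}_{n_i}$ to $G(l,1,n_i)$ and twisting by $\eta_i$, one obtains the classical identification
\[
V_\lambda \cong \Ind_{H_\lambda}^{G(l,1,n)}\ \bigotimes_{i=1}^l \bigl(S^{\lambda^{(i)}} \otimes \eta_i\bigr).
\]

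Next I would apply Mackey to $\Res^{G(l,1,n)}_{G(l,1,n-1)} V_\lambda$. Identifying $G(l,1,n)/G(l,1,n-1)$ with the $G(l,1,n)$-orbit $\{\zeta^r y_j : 0 \leq r < l,\ 1\leq j \leq n\}$ of $y_n$, the subgroup $H_\lambda$ acts on this set with exactly $l$ orbits, one for each block $B_i$ (since each $G(l,1,n_i)$ factor acts transitively on $\{\zeta^r y_j : 0\leq r < l,\ j \in B_i\}$). The Mackey sum therefore has $l$ terms, and a direct check shows that the relevant intersection for the $i$-th term, up to conjugation, is $G(l,1,n_1)\times\cdots\times G(l,1,n_i-1)\times\cdots\times G(l,1,n_l) \subset G(l,1,n-1)$, while the inner restriction reduces to $\Res^{G(l,1,n_i)}_{G(l,1,n_i-1)}(S^{\lambda^{(i)}}\otimes \eta_i)$. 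The classical Young branching rule for $\mathfrak{S}_{n_i}$ gives
\[
\Res^{G(l,1,n_i)}_{G(l,1,n_i-1)}\bigl(S^{\lambda^{(i)}}\otimes \eta_i\bigr) \;=\; \bigoplus_{\nu} \bigl(S^\nu \otimes \eta_i\bigr),
\]
summed over partitions $\nu$ obtained from $\lambda^{(i)}$ by deleting a single box. Reassembling each summand via the displayed induced realization identifies it with $V_\mu$, where $\mu$ is the $l$-partition obtained from $\lambda$ by removing the corresponding box from its $i$-th component. Taking the union over $i$ yields exactly the restriction formula. The induction formula then follows by Frobenius reciprocity: the multiplicity of $V_\lambda$ in $\Ind^{G(l,1,n+1)}_{G(l,1,n)} V_\mu$ equals the multiplicity of $V_\mu$ in $\Res V_\lambda$, which by the restriction formula is $1$ exactly when $\mu$ is obtained from $\lambda$ by adding a box, i.e.\ $\lambda$ from $\mu$ by removing one.

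The main obstacle is the combinatorial bookkeeping in the Mackey step: pinning down the double coset representatives, verifying the shape of each conjugate intersection $gH_\lambda g^{-1}\cap G(l,1,n-1)$, and checking that re-induction from this intersection up to $G(l,1,n-1)$ reassembles into the stated $V_\mu$ with multiplicity one. Once this is set up cleanly, the remainder reduces transparently to the classical symmetric-group branching rule, which I would invoke without proof.
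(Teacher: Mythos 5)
The paper itself offers no proof of this statement: it is quoted directly from \cite[Proposition 5.1.8]{geck-jacon}, so there is no internal argument to compare against. Your proof is correct, and it is in substance the standard proof of the cited result: the realization $V_\lambda \cong \Ind_{H_\lambda}^{G(l,1,n)}\bigotimes_{i}\bigl(S^{\lambda^{(i)}}\otimes\eta_i\bigr)$ is precisely the Clifford-theoretic wreath-product construction by which $\Irr(G(l,1,n))$ is parametrized by $l$-partitions in the first place, and once that is in hand, Mackey's formula reduces restriction to Young's branching rule componentwise, with Frobenius reciprocity (plus semisimplicity, automatic over $\mathbb{C}$) yielding the induction formula. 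Your identification of the conjugate intersections $gH_\lambda g^{-1}\cap G(l,1,n-1)$ as Young-type subgroups of type $(n_1,\ldots,n_i-1,\ldots,n_l)$ is the right computation and it does go through. One small correction: $H_\lambda$ has one orbit on $\{\zeta^r y_j\}$ for each \emph{nonempty} block $B_i$, so the Mackey sum has as many terms as there are nonempty components of $\lambda$, not $l$ in general; this is harmless, since an empty component contributes no removable box, and the final formula is unaffected.
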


Using these decompositions, we obtain a basis of $V_\lambda$ indexed by the so-called standard tableaux of shape $\lambda$. Given a $l$-partition $\lambda$ of $n$, a standard tableau of shape $\lambda$ is a bijection $\mathfrak{t}\colon\! [\lambda]\rightarrow \{1,\ldots,n\}$ such that for all boxes $\gamma=(a,b,c)$ and $\gamma'=(a',b',c)$ of $\lambda$, we have $\mathfrak{t}(\gamma) < \mathfrak{t}(\gamma')$ if $a=a'$ and $b<b'$ or $a<a'$ and $b=b'$. The datum of a standard tableau of shape $\lambda$ is equivalent to the datum of a finite sequence $(\lambda[i])_{1\leq i \leq n}$ such that $\lambda[i]$ is a $l$-partition of $i$, $\lambda[n] = \lambda$ and $[\lambda[i+1]]$ is obtained from $[\lambda[i]]$ by adding a box. We denote by $\lambda^{\mathfrak{t}}$ the sequence of $l$-partitions obtained from the standard tableau ${\mathfrak{t}}$, that is $[\lambda^{\mathfrak{t}}[i]] = {\mathfrak{t}}^{-1}\{1,\ldots,i\}$.

Therefore, using the branching rule for restriction, we have $V_\lambda = \bigoplus_{\mathfrak{t}}D_{\mathfrak{t}}$ where the sum is over the standard tableaux of shape $\lambda$ and $D_{\mathfrak{t}}$ is a one dimensional space contained in the irreducible component $V_{\lambda^{\mathfrak{t}}[i]}$ of $\Res_{G(l,1,i)}^{G(l,1,n)}(V_\lambda)$ for all $1 \leq i \leq n$.

The Jucys--Murphy elements act diagonally on $V_\lambda$ with respect to the decomposition $V_\lambda = \bigoplus_{\mathfrak{t}}D_{\mathfrak{t}}$. On $D_{\mathfrak{t}}$, the Jucys--Murphy element $J_i$ acts by multiplication by $l(k_{1-c}-c_0(b-a))$, where $\mathfrak{t}(a,b,c) = i$, see \cite[Corollary 1.8]{lacabanne}.

Given $\xi\in \mathbb{C}$ and $\lambda$ a $l$-partition of $n-1$, we refine the induction as follows:
\[
  \xi-\Ind_{G(l,1,n-1)}^{G(l,1,n)}(V_\lambda) = \bigoplus_{\mu}V_\mu,
\]
the sum being over the $l$-partitions $\mu$ of $n$ whose Young diagram is obtained by adding a box to $[\lambda]$ and such that $J_{n}$ acts on $V_\mu$ by multiplication by $\xi$. Using this truncated induction, we recursively define a set of representations of $G(l,1,n)$ as follows:
\[
  \mathcal{E}_0 = \{\mathbf{1}\}\quad\text{and}\quad
  \mathcal{E}_n =\left\{\xi-\Ind_{G(l,1,n-1)}^{G(l,1,n)}(V)\ \middle\vert\ V \in \mathcal{E}_{n-1},\ \xi\in \mathbb{C}\right\}\backslash\{0\},
\]
where $\mathbf{1}$ denotes the trivial representation of the trivial group. From the above discussion, we immediately deduce the following new way to define the Jucys-Murphy cellular characters. 

\begin{prop}
  \label{prop:JM-char-truncated-induction}
  The set of characters of representations in $\mathcal{E}_n$ coincides with the set of Jucys--Murphy constructible characters.
\end{prop}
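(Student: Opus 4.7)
I would proceed by induction on $n$. The base case $n=0$ is immediate: both sides reduce to $\{\mathbf{1}\}$.

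For the induction step, I would first record that $\JM_c(l,n)$ acts diagonally on each $V_\lambda$ via the tableau decomposition, so it is semisimple and its simple modules are one-dimensional, parametrised by those tuples $(\xi_1,\dots,\xi_n)\in\mathbb{C}^n$ that are realised as joint $(J_1,\dots,J_n)$-eigenvalues on some $D_{\mathfrak{t}}$. For such a simple module $L$, the $\chi_\lambda$-coefficient of $\gamma_L^{\mathrm{JM}}$ is precisely the number of standard tableaux $\mathfrak{t}$ of shape $\lambda$ realizing these eigenvalues.

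The heart of the argument would be the recursive identity
\[
  \gamma_L^{\mathrm{JM}} = \xi_n-\Ind_{G(l,1,n-1)}^{G(l,1,n)}\bigl(\gamma_{L'}^{\mathrm{JM}}\bigr),
\]
where $L'$ is the simple $\JM_c(l,n-1)$-module with eigenvalues $(\xi_1,\dots,\xi_{n-1})$ (a tuple that is automatically realised, since truncating any $\mathfrak{t}$ realizing $(\xi_1,\dots,\xi_n)$ produces a standard tableau realizing $(\xi_1,\dots,\xi_{n-1})$). I would prove the identity by comparing $\chi_\lambda$-coefficients: the right-hand coefficient counts pairs $(\mathfrak{s},\gamma)$ consisting of a standard tableau $\mathfrak{s}$ of some shape $\mu$ realizing $(\xi_1,\dots,\xi_{n-1})$ together with an addable box $\gamma$ of $[\mu]$ producing $[\lambda]$ whose content $l(k_{1-c}-c_0(b-a))$ equals $\xi_n$. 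Placing $n$ in $\gamma$ extends $\mathfrak{s}$ to a standard tableau $\mathfrak{t}$ of shape $\lambda$, and the explicit $J_n$-eigenvalue formula recalled in the excerpt (from \cite[Corollary 1.8]{lacabanne}) guarantees that $\mathfrak{t}$ realizes precisely $(\xi_1,\dots,\xi_n)$. The resulting bijection matches the left-hand coefficient.

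Granted this recursion, the two inclusions are bookkeeping. Given $V = \xi-\Ind(V')\in\mathcal{E}_n$, the inductive hypothesis produces a simple $L'$ whose $\gamma_{L'}^{\mathrm{JM}}$ equals the character of $V'$; since $V\neq 0$, appending $\xi$ to the eigenvalues of $L'$ yields a tuple that is actually realized and hence defines a simple $L$, and the recursion gives $\gamma_L^{\mathrm{JM}}$ equal to the character of $V$. Conversely, given a simple $L$, truncating its eigenvalues yields a simple $L'$; by induction, $\gamma_{L'}^{\mathrm{JM}}$ is the character of some $V'\in\mathcal{E}_{n-1}$, and then $\xi_n-\Ind(V')$ is a nonzero element of $\mathcal{E}_n$ whose character equals $\gamma_L^{\mathrm{JM}}$. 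The only nontrivial step is the recursion, which reduces entirely to the content-formula for $J_n$-eigenvalues already recorded in the excerpt, so no substantial obstacle remains.
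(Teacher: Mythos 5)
Your proof is correct and follows exactly the route the paper intends: the paper states this proposition as an immediate consequence of the preceding discussion (the diagonal action of the $J_i$ on the lines $D_{\mathfrak{t}}$, the eigenvalue formula $l(k_{1-c}-c_0(b-a))$, and the definition of truncated induction), and your induction on $n$ via the recursion $\gamma_L^{\mathrm{JM}} = \xi_n-\Ind_{G(l,1,n-1)}^{G(l,1,n)}(\gamma_{L'}^{\mathrm{JM}})$ is precisely that deduction written out in full, with the tableau-restriction bijection making it rigorous.
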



\begin{rem}
  Instead of considering the Jucys-Murphy elements $J_i$, we might consider the shifts $J_i+k$ by any scalar $k$. The set $\mathcal{E}_n$ does not change if we replace the Jucys-Murphy elements by these shifts.

  In terms of parameters, this is equivalent to replace the condition $k_0+\cdots+ k_{l-1}=0$ by $k_0+\cdots+k_{l-1}=k$. Therefore, we may and will remove the condition $k_0+\cdots +k_{l-1}=0$ on the parameters without changing the set of Jucys-Murphy constructible characters.
\end{rem}



\section{Quasimonomial vectors of Fock spaces in type $A_{\infty}$}
\label{sec:monomial-bases}
%

Following \cite[Section 6.2]{geck-jacon}, we introduce the Fock space as a representation of $U_q(\mathfrak{sl}_{\infty})$ together with quasimonomial vectors. We then relate evaluations at $q=1$ of these vectors with the Jucys--Murphy cellular characters introduced in the previous section.

We fix $q$ an indeterminate and define the usual quantum integers and factorials in $\mathbb{Z}[q,q^{-1}]$ by
\[
  [n] = \frac{q^n-q^{-n}}{q-q^{-1}}\quad\text{and}\quad [n]! = \prod_{k=1}^{n}[k],
\]
for $n\in \mathbb{N}$.

\subsection{The algebra $U_q(\mathfrak{sl}_{\infty})$ and the Fock space $\mathcal{F}_{\mathbf{s}}$}
\label{sec:algebra-sl}

Let $U_q(\mathfrak{sl}_{\infty})$ be the $\mathbb{Q}(q)$-algebra with generators $E_i,F_i$ and $K_i^{\pm 1}$ for $i \in \mathbb{Z}$ subject to
\begin{gather*}
  K_iK_i^{-1} = 1 = K_i^{-1}K_i,\quad K_i K_j = K_j K_i, \quad K_iE_j = q^{-\delta_{i-1,j}+2\delta_{i,j}-\delta_{i+1,j}}E_jK_i,\\
  K_iF_j = q^{\delta_{i-1,j}-2\delta_{i,j}+\delta_{i+1,j}}F_jK_i, \quad [E_i,F_j] = \delta_{i,j}\frac{K_i-K_i^{-1}}{q-q^{-1}},
\end{gather*}
and the Serre relations
\begin{align*}
  E_i^2E_j - [2] E_iE_jE_i + E_jE_i^2 &= 0, & F_i^2F_j - [2] F_iF_jF_i + F_jF_i^2 &= 0, &\text{ if }\abs{i-j} = 1,\\
  [E_i,E_j] &= 0, & [F_i,F_j]&=0, &\text{ if }\abs{i-j} > 1.
\end{align*}

We define the divided powers $E_i^{(r)}$ and $F_i^{(r)}$ by
\[
  E_i^{(r)}=\frac{E_i}{[r]!}\quad\text{and}\quad F_i^{(r)}=\frac{F_i}{[r]!}.
\]
We endow $U_q(\mathfrak{sl}_{\infty})$ with a structure of Hopf algebra. The coproduct $\Delta$, the antipode $S$ and the counit $\varepsilon$  are given on the generators by
\begin{align*}
  \Delta(E_i) &=  E_i\otimes 1 + K_i^{-1}\otimes E_i, & S(E_i) &= -E_iK_i, & \varepsilon(E_i) &= 0,\\
  \Delta(F_i) &= F_i\otimes K_i + 1 \otimes F_i, & S(F_i) &= -K_i^{-1}F_i, & \varepsilon(F_i) &= 0,\\
  \Delta(K_i) &= K_i\otimes K_i, & S(K_i) &= K_i^{-1}, & \varepsilon(K_i) &= 1.
\end{align*}

The fundamental weights of $U_q(\mathfrak{sl}_{\infty})$ are denoted by $(\Lambda_k)_{k\in\mathbb{Z}}$. We now fix $l\geq 0$ and $\mathbf{s}=(s_1,\ldots,s_l)\in\mathbb{Z}^l$. We define the Fock space $\mathcal{F}_{\mathbf{s}}$ of charge $\mathbf{s}$ as the $\mathbb{Q}(q)$ vector space with basis $(\ket{\lambda,\mathbf{s}})$ where $\lambda$ runs in the set of $l$-partitions of integers. We set
\[
  \mathcal{F}^n_{\mathbf{s}} = \bigoplus_{\lambda}\mathbb{Q}(q)\ket{\lambda,\mathbf{s}}
\]
where $\lambda$ runs in the set of $l$-partitions of $n$ so that $\mathcal{F}_{\mathbf{s}}=\bigoplus_{n\in\mathbb{N}}\mathcal{F}^n_{\mathbf{s}}$.

In order to define an action of $U_q(\mathfrak{sl}_{\infty})$ on the Fock space $\mathcal{F}_{\mathbf{s}}$, we first need the notion of charged content of a box. Given $\lambda$ a $l$-partition and $\gamma=(a,b,c)$ a box of its Young diagram, the charged content (relative to $\mathbf{s}$) is the integer $b-a+s_c$. If $\gamma=(a,b,c)$ and $\gamma'=(a',b',c')$ are two removable or addable boxes of $\lambda$ with same content, we set $\gamma \prec_{\mathbf{s}} \gamma'$ (resp. $\gamma \succ_{\mathbf{s}} \gamma'$) if $c<c'$ (resp. $c>c'$). 

\begin{rem}\label{diffcon}
Note that two removable (resp. addable) boxes of same content must lie in different components of $[\lambda]$. Thus, for each $i\in \mathbb{Z}$, we have at most $l$ removable (resp. addable) boxes with content $i$ in $[\lambda]$. 
\end{rem}


Finally, if $\lambda$ and $\mu$ are two $l$-partitions respectively of $n$ and $n+1$ with $[\mu]=[\lambda]\cup\{\gamma\}$ and $\gamma$ of charged content $i$, we set
\begin{align*}
  N_i^{\succ}(\lambda,\mu)=&\abs*{\left\{\text{addable nodes }\gamma'\text{ to }\lambda\text{ of charged content }i\text{ with }\gamma'\succ_{\mathbf{s}}\gamma\right\}}\\
                          &-\abs*{\left\{\text{removable nodes }\gamma'\text{ of }\lambda\text{ of charged content }i\text{ with }\gamma'\succ_{\mathbf{s}}\gamma\right\}}\\
  N_i^{\prec}(\lambda,\mu)=&\abs*{\left\{\text{addable nodes }\gamma'\text{ to }\lambda\text{ of charged content }i\text{ with }\gamma'\prec_{\mathbf{s}}\gamma\right\}}\\
             &-\abs*{\left\{\text{removable nodes }\gamma'\text{ of }\lambda\text{ of charged content }i\text{ with }\gamma'\prec_{\mathbf{s}}\gamma\right\}}\\
  N_i(\lambda)=&\abs*{\left\{\text{addable nodes to }\lambda\text{ of charged content }i\right\}}\\
                           &-\abs*{\left\{\text{removable nodes of }\lambda\text{ of charged content }i\right\}}
\end{align*}

\begin{prop}[{\cite[Proposition 3.5]{jimbo-misra-miwa-okado}}]
  \label{prop:action-fock}
  The Fock space $\mathcal{F}_{\mathbf{s}}$ is an integrable $U_q(\mathfrak{sl}_{\infty})$-module with action given by
  \begin{align*}
    E_i\cdot \ket{\lambda,\mathbf{s}} &= \sum_{\substack{\mu,[\lambda]\backslash [\mu] = \{\gamma\}\\ \gamma\text{ of charged content }i}}q^{-N_i^{\prec}(\mu,\lambda)}\ket{\mu,\mathbf{s}}\\
    F_i\cdot \ket{\lambda,\mathbf{s}} &= \sum_{\substack{\mu,[\mu]\backslash [\lambda] = \{\gamma\}\\ \gamma\text{ of charged content }i}}q^{N_i^{\succ}(\lambda,\mu)}\ket{\mu,\mathbf{s}}\\
    K_i \cdot \ket{\lambda,\mathbf{s}} &= q^{N_i(\lambda)}\ket{\lambda,\mathbf{s}}.
  \end{align*}
\end{prop}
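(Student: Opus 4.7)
The plan is to realise $\mathcal{F}_{\mathbf{s}}$ as an iterated tensor product of level-$1$ Fock spaces, using the coproduct of $U_q(\mathfrak{sl}_{\infty})$, and then to unwind this tensor structure to recover the displayed formulas.

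First, for each $s\in\mathbb{Z}$, I would introduce the level-$1$ Fock space $\mathcal{F}_s$ with basis $(\ket{\lambda,s})$ indexed by ordinary partitions, carrying the action specified in the statement. In the level-$1$ setting, each content $i$ admits at most one addable and at most one removable box, so the weights $N_i^{\succ}$ and $N_i^{\prec}$ are vacuous and the formulas reduce to the usual $F_i\ket{\lambda,s}=\ket{\mu,s}$, $E_i\ket{\lambda,s}=\ket{\mu',s}$, $K_i\ket{\lambda,s}=q^{N_i(\lambda)}\ket{\lambda,s}$. Checking that this defines an integrable module isomorphic to the basic representation $L(\Lambda_s)$ amounts to a finite combinatorial case analysis of how addable and removable boxes of contents $i$ and $i\pm 1$ distribute along the rim of a partition: the relation $[E_i,F_i]=(K_i-K_i^{-1})/(q-q^{-1})$ follows by telescoping over rim boxes of content $i$, and the Serre relations for $|i-j|=1$ reduce to inspecting the finitely many configurations in which an $i$-corner and an $(i\pm 1)$-corner are adjacent. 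Commutations for $|i-j|>1$ are obvious from disjointness of supports, and integrability is immediate since only finitely many $E_i,F_i$ act nontrivially on any given $\ket{\lambda,s}$.

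Next, I would set $\mathcal{F}_{\mathbf{s}}:=\mathcal{F}_{s_1}\otimes\cdots\otimes\mathcal{F}_{s_l}$ under the identification $\ket{\lambda,\mathbf{s}}\leftrightarrow \ket{\lambda^{(1)},s_1}\otimes\cdots\otimes\ket{\lambda^{(l)},s_l}$, equipped with the module structure induced by the iterated coproduct
\[
\Delta^{(l-1)}(F_i)=\sum_{j=1}^{l} 1^{\otimes(j-1)}\otimes F_i\otimes K_i^{\otimes(l-j)},\qquad \Delta^{(l-1)}(E_i)=\sum_{j=1}^{l} (K_i^{-1})^{\otimes(j-1)}\otimes E_i\otimes 1^{\otimes(l-j)}.
\]
Applied to $\ket{\lambda,\mathbf{s}}$, the summand indexed by $j=c$ adds (respectively removes) a box $\gamma$ of charged content $i$ in the $c$-th component, weighted by the product of the $K_i^{\pm 1}$-eigenvalues on the remaining tensor factors. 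By \cref{diffcon}, two addable or removable boxes of the same content lie in distinct components, so the addable or removable nodes of content $i$ satisfying $\gamma'\succ_{\mathbf{s}}\gamma$ are exactly those living in a component of index strictly greater than $c$. Consequently, the prefactor collected by $F_i$ is $q^{\sum_{c'>c}N_i(\lambda^{(c')})}=q^{N_i^{\succ}(\lambda,\mu)}$, and the analogous computation on the $E_i$ side produces $q^{-N_i^{\prec}(\mu,\lambda)}$. This matches the stated formulas, and integrability of $\mathcal{F}_{\mathbf{s}}$ is inherited from the tensor factors.

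The main obstacle is the level-$1$ verification. The underlying combinatorics is classical but requires attentive bookkeeping when consecutive contents interact along the rim, so that $[E_i,F_i]$ telescopes to the correct $K_i$-weight and the cubic Serre relations hold on the nose. Once the level-$1$ module is established, the transition to arbitrary $\mathbf{s}$ is a formal consequence of the coproduct, and the remaining task is purely the order-bookkeeping against $\prec_{\mathbf{s}}$ described above.
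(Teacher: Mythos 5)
Your proposal is correct, and it is in substance the same construction that the paper itself records: the paper gives no proof of this proposition (it is cited from Jimbo--Misra--Miwa--Okado), but \cref{sec:fock_space_2} contains precisely your tensor-product realization, $\mathcal{F}'_{\mathbf{s}} = V(\Lambda_{s_1})\otimes\cdots\otimes V(\Lambda_{s_l})$ with the same coproduct conventions, together with a bijection intertwining the two descriptions. Your bookkeeping in the tensor step is right: with $\Delta(F_i)=F_i\otimes K_i+1\otimes F_i$, the summand acting on the $c$-th factor collects $q^{\sum_{c'>c}N_i(\lambda^{(c')})}=q^{N_i^{\succ}(\lambda,\mu)}$, because $\succ_{\mathbf{s}}$ compares component indices and, by Remark \ref{diffcon}, same-content addable and removable boxes always lie in distinct components; the $E_i$ computation is dual. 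The one place where your route genuinely differs---and where essentially all of the remaining work is concentrated---is the level-$1$ verification, which you only sketch, in the partition model: there one must do the rim-combinatorics case analysis for $[E_i,F_j]$ and the Serre relations (in level $1$ each term of the Serre relation in fact vanishes separately, since $F_i^2=0$ and $F_iF_{i\pm 1}F_i=0$ on a single partition, so the analysis is finite as you claim; your identification of the level-$1$ Fock space with $L(\Lambda_s)$ is also correct for $\mathfrak{sl}_{\infty}$, though not needed). The paper's description sidesteps this step entirely: in the $\beta$-sequence model of $V(\Lambda_k)$ used in \cref{sec:fock_space_2}, the operators $E_i,F_i,K_i$ only test membership of $i$ and $i+1$ in $\beta$, so the defining relations follow from a trivial case check, and all partition combinatorics is absorbed into the bijection between symbols and multipartitions. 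Both routes prove the statement; yours is self-contained in the partition language but more error-prone at the level-$1$ stage, while the symbol route makes that stage nearly automatic.
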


Therefore, the vector $\ket{\emptyset,\mathbf{s}}$ is a highest weight vector of weight $\sum_{i=1}^l\Lambda_{s_i}$.

A non-zero vector $v$ of $\mathcal{F}_{\mathbf{s}}^n$ is said to be \emph{quasimonomial} if it is obtained from $\ket{\emptyset,\mathbf{s}}$ by successive applications of $F_i$'s, that is if there exists a sequence of integers $(i_1,\ldots,i_n)$ such that
\[
  v = (F_{i_n}\cdots F_{i_1})\cdot \ket{\emptyset,\mathbf{s}}. 
\]

A non-zero vector $v$ of $\mathcal{F}_{\mathbf{s}}^n$ is said to be \emph{monomial} if it is obtained from $\ket{\emptyset,\mathbf{s}}$ by successive applications of divided powers of the $F_i$'s, that is if there exists a sequence of integers $(i_1,\ldots,i_k)$ with $i_{j+1}\neq i_j$ and $(r_1,\ldots,r_k)$ integers with $r_1+\cdots+r_k=n$ such that
\[
  v = (F^{(r_k)}_{i_k}\cdots F^{(r_1)}_{i_1})\cdot \ket{\emptyset,\mathbf{s}}. 
\]

From the formulas of the action in \cref{prop:action-fock} we easily see that monomial and quasimonomial vectors are elements of $\bigoplus \mathbb{N}[q,q^{-1}]\ket{\lambda,\mathbf{s}}$. 

To a quasimonomial vector $v\in\mathcal{F}_{\mathbf{s}}^n$, we associate a character $\gamma_v$ of $G(l,1,n)$ as follows. We first express $v$ along the basis $(\ket{\lambda,\mathbf{s}})$ of $\mathcal{F}_{\mathbf{s}}^n$:
\[
  v = \sum_{\lambda}a_{\lambda}(q)\ket{\lambda,\mathbf{s}},
\]
where $\lambda$ runs in the set of $s$-partitions of $n$ and $a_\lambda(q)\in\mathbb{N}[q,q^{-1}]$. We then obtain the character $\gamma_v$ by evaluation at $q=1$:
\[
  \gamma_v = \sum_{\lambda}a_{\lambda}(1)\chi_\lambda.
\]

\subsection{Canonical basis}
\label{sec:canonical}

Let $x\mapsto \overline{x}$ be the $\mathbb{Q}$-linear algebra involution of $U_q(\mathfrak{sl}_{\infty})$ given by
\begin{align*}
  \overline{q}&=q^{-1}, & \overline{K_i}&= K_i^{-1}, & \overline{E_i}&=E_i, & \overline{F_i} = F_i.
\end{align*}

We consider the submodule $V_{\mathbf{s}}$ of $\mathcal{F}_{\mathbf{s}}$ generated by the highest weight vector $\ket{\emptyset,\mathbf{s}}$, which is then isomorphic to the integrable irreducible module of highest weight $\Lambda = \sum_{i=1}^l\Lambda_{s_i}$. Any element $v\in V_{\mathbf{s}}$ can then be expressed as $v=x\cdot\ket{\emptyset,\mathbf{s}}$ for some $x\in U_q(\mathfrak{sl}_{\infty})$ and we set $\overline{v} = \overline{x}\cdot \ket{\emptyset,\mathbf{s}}$. This gives a well-defined involution on $V_{\mathbf{s}}$.

Let $R$ be the subring of $\mathbb{Q}(q)$ of rational functions regular at $q=0$ and consider $\mathcal{F}_{\mathbf{s},R}$ the $R$-sublattice of $\mathcal{F}_{\mathbf{s}}$ generated by $(\ket{\lambda,\mathbf{s}})_{\lambda}$ for $\lambda$ running in the $l$-partitions of integers. In order to state the existence of canonical basis, we need the definition of a cylindrical multipartition.

\begin{defn}
  Suppose that $\mathbf{s}=(s_1,\ldots,s_l)$ is such that $s_1 \geq s_2 \geq \cdots \geq s_l$. A $l$-partition $\lambda=(\lambda^{(1)},\ldots,\lambda^{(l)})$ is said to be cylindrical if $\lambda^{(k)}_{i+s_{k}-s_{k+1}}\geq \lambda_{i}^{(k+1)}$ for all $1 \leq k \leq l$ and $i \in \mathbb{N}^*$. 
\end{defn}

\begin{thm}[\cite{lusztig-canonical,kashiwara}]
  There exists a unique basis $\left\{G(\lambda,\mathbf{s})\ \middle\vert \ \lambda \text{ cylindrical }l-\text{partition}\right\}$ of $V_{\mathbf{s}}$ such that
  \begin{itemize}
  \item $\overline{G(\lambda,\mathbf{s})} = G(\lambda,\mathbf{s})$,
  \item $G(\lambda,\mathbf{s}) = \ket{\lambda,\mathbf{s}} \mod q\mathcal{F}_{\mathbf{s},R}$.
  \end{itemize}
\end{thm}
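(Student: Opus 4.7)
The plan is to first establish uniqueness by the classical triangularity argument and then invoke Kashiwara's construction of global crystal bases for integrable highest weight modules, with an additional combinatorial step to identify the parametrizing set with cylindrical $l$-partitions.

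For uniqueness, I would show that any bar-invariant element $v \in V_{\mathbf{s}}$ congruent to $\ket{\lambda,\mathbf{s}}$ modulo $q\mathcal{F}_{\mathbf{s},R}$ is pinned down: if $v,v'$ both satisfy the two conditions, then $v-v'$ is bar-invariant and sits in $q\mathcal{F}_{\mathbf{s},R} \cap V_{\mathbf{s}}$. Applying the bar involution, $v-v'$ also lies in the image $\overline{q\mathcal{F}_{\mathbf{s},R}} \cap V_{\mathbf{s}}$, and the standard intersection lemma $V_{\mathbf{s}} \cap q\mathcal{F}_{\mathbf{s},R} \cap \overline{q\mathcal{F}_{\mathbf{s},R}} = 0$ yields $v-v'=0$. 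This argument reduces to checking that the bar involution behaves upper-triangularly on the basis $(\ket{\mu,\mathbf{s}})$ with respect to an appropriate partial order on $l$-partitions; this is where the combinatorics of charged contents and of the order $\prec_{\mathbf{s}}$ enter.

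For existence, I would apply Kashiwara's theorem on integrable highest weight modules to the irreducible module $L(\Lambda)$ with $\Lambda = \sum_{i=1}^l \Lambda_{s_i}$. This produces a crystal basis $(\mathcal{L},\mathcal{B})$ of $V_{\mathbf{s}}$, and the global basis is obtained by lifting each crystal vertex to its unique bar-invariant representative via the uniqueness above. The Kashiwara operators $\tilde{e}_i,\tilde{f}_i$ can be read off from the formulas of \cref{prop:action-fock}: modulo the crystal lattice, the operator $\tilde{f}_i$ amounts to adding a \emph{good} addable box of charged content $i$ to a given $l$-partition, singled out by a reading-word cancellation procedure that uses precisely the order $\prec_{\mathbf{s}}$.

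The main obstacle is the combinatorial identification of the connected component of $\ket{\emptyset,\mathbf{s}}$ in this crystal with the set of cylindrical $l$-partitions. I would proceed in two steps: first, verify that cylindricity is preserved by each $\tilde{f}_i$, which is a case analysis on the location of the good addable box relative to the constraints $\lambda^{(k)}_{i+s_k-s_{k+1}} \geq \lambda^{(k+1)}_i$; second, show by induction on $|\lambda|$ that every cylindrical $l$-partition arises from $\emptyset$ by a sequence of $\tilde{f}_i$'s, by exhibiting a good removable box whose removal preserves cylindricity. The interaction between the reading-word order coming from $\prec_{\mathbf{s}}$ and the cylindricity inequalities is the delicate part of the argument, since one must simultaneously track the cancellation pattern determining good boxes and the row-by-row comparison between consecutive components of $\lambda$.
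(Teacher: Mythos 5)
Your overall strategy is, in substance, the one behind the sources the paper cites: the paper offers no proof of this theorem (it is quoted from Lusztig and Kashiwara), and those proofs do proceed by Kashiwara's crystal-lattice and globalization machinery for $L(\Lambda)$, $\Lambda=\sum_i\Lambda_{s_i}$, together with an identification of the highest-weight crystal component inside the Fock space crystal. So the architecture of your proposal is the right one; the problem is in the execution of the uniqueness step.

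The intersection lemma you invoke, $V_{\mathbf{s}} \cap q\mathcal{F}_{\mathbf{s},R} \cap \overline{q\mathcal{F}_{\mathbf{s},R}} = 0$, is \emph{false} over the ring $R$ of rational functions regular at $q=0$. Take any vector $G$ satisfying the two conditions of the theorem and set $f(q)=q/(1+q)^{2}$. Then $f(q^{-1})=f(q)$ and $f\in qR$, so $fG$ is a nonzero bar-invariant element of $V_{\mathbf{s}}\cap q\mathcal{F}_{\mathbf{s},R}$, hence it also lies in $\overline{q\mathcal{F}_{\mathbf{s},R}}$ under any reasonable reading of that symbol; your triple intersection therefore contains nonzero vectors. (The same example shows that $(1+f)G$ again satisfies both conditions of the theorem, so the characterization with the lattice taken over $R$ pins the basis down only up to multiplication by such bar-symmetric units; this looseness is present in the printed statement as well.) The missing ingredient is the third lattice of Kashiwara's \emph{balanced triple}: the integral form $V_{\mathbf{s},\mathbb{Z}[q,q^{-1}]}=U^-_{\mathbb{Z}[q,q^{-1}]}\cdot\ket{\emptyset,\mathbf{s}}$. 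Uniqueness works only after one knows the relevant coefficients are Laurent polynomials, because $c(q)=c(q^{-1})$ together with $c\in q\mathbb{Z}[q]$ forces $c=0$ via $q\mathbb{Z}[q]\cap q^{-1}\mathbb{Z}[q^{-1}]=0$, whereas in $qR$ the symmetry $c(q)=c(q^{-1})$ has nonzero solutions (the $f$ above). A correct write-up must either run the uniqueness argument inside the integral form, or first prove that any element satisfying your two conditions automatically has Laurent-polynomial coefficients. A second, related flaw: you reduce uniqueness to upper-triangularity of the bar involution on the standard basis $(\ket{\mu,\mathbf{s}})$, but in the paper's setting the involution of \cref{sec:canonical} is defined only on the submodule $V_{\mathbf{s}}$; its triangular extension to all of $\mathcal{F}_{\mathbf{s}}$ is a nontrivial theorem of Uglov and cannot be assumed.

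On the combinatorial step, your plan is workable but harder than necessary: rather than a case analysis showing that the Kashiwara operators preserve cylindricity and that every cylindrical $l$-partition is reachable from $\emptyset$, use the tensor-product description of \cref{sec:fock_space_2}. The crystal of $\mathcal{F}_{\mathbf{s}}$ is $B(\Lambda_{s_1})\otimes\cdots\otimes B(\Lambda_{s_l})$, and the standard signature-rule criterion for membership in the highest-weight component of a tensor product of crystals translates directly into the standardness condition on symbols, which the paper already observes is equivalent to cylindricity of $\lambda_S$.
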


The basis $(G(\lambda,\mathbf{s}))$ of $V_{\mathbf{s}}$ is the canonical basis of $V_{\mathbf{s}}$. As for quasimonomial vectors, we can evaluate at $q=1$ and obtain (virtual) characters of $G(l,1,n)$. It is expected that these characters should coincide with the Calogero--Moser cellular characters.

\begin{rem}
The cylindrical multipartitions are the multipartitions which naturally indexed the crystal basis of $V_{\mathbf{s}}$ in the case where $s_1 \geq s_2 \geq \cdots \geq s_l$. 
In the general case where ${\bf s}\in \mathbb{Z}^l$, the  analogous multipartitions  are the images of  the cylindrical multipartitions 
 by certain combinatorial maps described  in \cite{jacon-lecouvey}. 
\end{rem}

\subsection{Another description of the Fock space}
\label{sec:fock_space_2}

We follow the description of the Fock space given in \cite{leclerc-miyachi}. Let $V(\Lambda_k)$ be the irreducible integrable representation of $U_q(\mathfrak{sl}_{\infty})$ of highest weight $\Lambda_k$. A basis of $V(\Lambda_k)$ is given by $(v_\beta)_\beta$  where $\beta$ runs in the set of sequences $(\beta_j)_{j\leq k}$ such that $\beta_{j-1} < \beta_{j}$ for all $j \leq k$ and $\beta_{j} = j$ for $j\ll 0$. The action of $U_q(\mathfrak{sl}_{\infty})$ on $V(\Lambda_k)$ is given by
\begin{align*}
  E_iv_\beta &=
               \begin{cases}
                 v_\gamma & \text{if }i\not\in\beta\text{ and } i+1\in\beta,\text{ with } \gamma=(\beta\setminus\{i+1\})\cup\{i\},\\
                 0 & \text{otherwise},
               \end{cases}
\\
  F_iv_\beta &=
               \begin{cases}
                 v_\gamma & \text{if }i\in\beta\text{ and } i+1\not\in\beta,\text{ with } \gamma=(\beta\setminus\{i\})\cup\{i+1\},\\
                 0 & \text{otherwise},
               \end{cases}\\
  K_iv_\beta &=
               \begin{cases}
                 qv_\beta & \text{if }i\in\beta\text{ and } i+1\not\in\beta,\\
                 q^{-1}v_\beta & \text{if }i\not\in\beta\text{ and } i+1\in\beta,\\
                 v_\beta & \text{otherwise}.
               \end{cases}
\end{align*}

Given $\mathbf{s}=(s_1,\ldots,s_l)\in\mathbb{Z}^l$, the Fock space $\mathcal{F}'_{\mathbf{s}}$ is the tensor product
\[
  \mathcal{F}'_{\mathbf{s}} = V(\Lambda_{s_1})\otimes \cdots \otimes V(\Lambda_{s_l}),
\]
the $U_q(\mathfrak{sl}_{\infty})$-action is given by the coproduct. A basis of $\mathcal{F}'_{\mathbf{s}}$ is given by $(v_S)_S$, with $S$ a $l$-symbol
\[
  S =
  \begin{pmatrix}
    \beta_1\\
    \vdots\\
    \beta_l
  \end{pmatrix},
\]
where $\beta_i = (\beta_{i,j})_{j\leq s_i}$ is a sequence of integers such that $\beta_{i,j-1} < \beta_{i,j}$ for all $j \leq s_i$ and $\beta_{i,j} = j$ for $j\ll 0$. The vector $v_S$ is the tensor product $v_{\beta_1}\otimes \cdots \otimes v_{\beta_l}$.

Such symbols are in bijection with $l$-partitions. Given such a symbol $S$ we associate a $l$-partition $\lambda_S=(\lambda^{(1)},\ldots,\lambda^{(l)})$ by setting
\[
  \lambda^{(k)}_i = \beta_{k,s_k-i+1}-s_k+i-1.
\]
Thanks to the assumptions on the entries of the symbol $S$, we do have $\lambda^{(k)}_i \geq \lambda^{(k)}_{i+1}$ and $\lambda^{(k)}_i = 0$ for $i \gg 0$. It is then easy to see that $S\mapsto \lambda_S$ is a bijective map.

We can then detect the addable and removable nodes of the $l$-partition $\lambda_S$ from the symbol $S$. Indeed, a node $\gamma=(a,b,c)$ is removable if and only if $\beta_{c,s_c-a+1}=b-a+s_c+1$ and $b-a+s_c$ does not appear in $\beta_c$. Similarly, a node $\gamma=(a,b,c)$ is addable to $\lambda$ if and only if $\beta_{c,s_c-a+1}=b-a+s_c$ and $b-a+s_c+1$ does not appear in $\beta_c$. Since $b-a+s_c$ is nothing more than the charged content of $\gamma$, we can also recover the content of a removable or addable node from the symbol $S$.

\begin{ex}
  Consider the multicharge $(3,1,2)$ and the following symbol $S$
  \[
    S=\begin{pmatrix}
      \cdots & -1 & 0 & 2 & 5 & 7\\
      \cdots & -1 & 0 & 4 \\
      \cdots & -1 & 1 & 2 & 3
    \end{pmatrix}.
  \]
  It corresponds to the $3$-partition $\lambda_S=(4.3.1,3,1^3)$ whose Young diagram is
  \[
    \ytableausetup{aligntableaux=top}
    [\lambda_S]=\left(
      \begin{ytableau}
      3 & 4 & 5 & *(gray) 6\\
      2 & 3 & *(gray) 4\\
      *(gray) 1
    \end{ytableau}
    ,
    \begin{ytableau}
      1 & 2 & *(gray) 3
    \end{ytableau}
    ,
    \begin{ytableau}
      2\\
      1\\
      *(gray) 0
    \end{ytableau}
    \right)
  \]
  the entries corresponding to the charged content. Removable nodes are in gray and correspond to the entries $2,5$ and $7$ in the first row of $S$, to the entry $4$ in the second row of $S$ and to the entry $1$ in the last row of $S$.
\end{ex}

\begin{prop}
  The above bijection between $l$-symbols and $l$-partitions induces an isomorphism of $U_q(\mathfrak{sl}_{\infty})$-modules between $\mathcal{F}'_{\mathbf{s}}$ and $\mathcal{F}_{\mathbf{s}}$ given by $v_{S}\mapsto \ket{\lambda_S,\mathbf{s}}$.
\end{prop}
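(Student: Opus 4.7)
The plan is to verify that the $\mathbb{Q}(q)$-linear map $\Phi \colon \mathcal{F}'_{\mathbf{s}} \to \mathcal{F}_{\mathbf{s}}$ defined on basis elements by $v_S \mapsto \ket{\lambda_S,\mathbf{s}}$ intertwines the action of each generator $E_i$, $F_i$ and $K_i^{\pm 1}$ of $U_q(\mathfrak{sl}_{\infty})$. Since the assignment $S \mapsto \lambda_S$ is already a bijection from $l$-symbols to $l$-partitions, all that remains is this intertwining, which amounts to matching the coproduct-driven action on the tensor product $V(\Lambda_{s_1}) \otimes \cdots \otimes V(\Lambda_{s_l})$ with the combinatorial formulas of \cref{prop:action-fock}.

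First, I would record the iterated coproduct $\Delta^{(l-1)}$. A straightforward induction from $\Delta(F_i) = F_i \otimes K_i + 1 \otimes F_i$, $\Delta(E_i) = E_i \otimes 1 + K_i^{-1} \otimes E_i$ and $\Delta(K_i) = K_i \otimes K_i$ yields
\begin{align*}
\Delta^{(l-1)}(F_i) &= \sum_{c=1}^l 1^{\otimes(c-1)} \otimes F_i \otimes K_i^{\otimes(l-c)},\\
\Delta^{(l-1)}(E_i) &= \sum_{c=1}^l (K_i^{-1})^{\otimes(c-1)} \otimes E_i \otimes 1^{\otimes(l-c)},\\
\Delta^{(l-1)}(K_i) &= K_i^{\otimes l}.
\end{align*}

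Next, I would exploit the combinatorial dictionary already laid out in the excerpt: for a symbol $S$ with rows $\beta_1,\ldots,\beta_l$, the $c$-th component of $\lambda_S$ carries an addable (resp.\ removable) box of charged content $i$ exactly when $i\in \beta_c$ and $i+1\notin\beta_c$ (resp.\ $i+1\in\beta_c$ and $i\notin\beta_c$), these two conditions being mutually exclusive within any fixed component by \cref{diffcon}. Applying the $c$-th summand of $\Delta^{(l-1)}(F_i)$ to $v_S$ is therefore nonzero precisely when $\lambda_S$ has such an addable box $\gamma$ in component $c$, and the resulting vector equals $\Phi^{-1}(\ket{\mu,\mathbf{s}})$ for $[\mu]=[\lambda_S]\cup\{\gamma\}$, rescaled by the product of the $K_i$-eigenvalues on $v_{\beta_{c+1}},\ldots,v_{\beta_l}$. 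By the trichotomy above, each of those eigenvalues equals $q$ if the underlying component has an addable box of content $i$, $q^{-1}$ if it has a removable box of content $i$, and $1$ otherwise. Since $\gamma'\succ_{\mathbf{s}}\gamma$ is by definition the relation $c'>c$ on components, these local exponents sum to precisely $N_i^{\succ}(\lambda_S,\mu)$, matching the $F_i$-formula of \cref{prop:action-fock}.

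The treatment of $E_i$ is entirely symmetric: the $(K_i^{-1})^{\otimes(c-1)}$ prefix contributes $-N_i^{\prec}(\mu,\lambda_S)$ to the exponent of $q$, where $[\mu]=[\lambda_S]\setminus\{\gamma\}$, so the result coincides with the stated $E_i$-action. The $K_i$-case is immediate, since summing the $K_i$-eigenvalues over the $l$ tensor factors, via the same local dictionary, yields $q^{N_i(\lambda_S)}$. The main technical hurdle is purely bookkeeping: one must keep the choice of coproduct, the ordering $\prec_{\mathbf{s}}$ on boxes of equal content, and the convention on which partition is the larger one mutually consistent. Once the local addable/removable/neither trichotomy within each component is in hand, the exponent matching is mechanical and the three generator checks close the argument.
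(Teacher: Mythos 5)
Your proposal is correct and follows exactly the route the paper intends: the paper's proof simply declares the equivariance of $v_S\mapsto\ket{\lambda_S,\mathbf{s}}$ to be ``easily checked,'' and your argument is precisely that check carried out in full, via the iterated coproduct and the addable/removable/neither trichotomy matching the $K_i$-eigenvalues with the exponents $N_i^{\succ}$, $N_i^{\prec}$ and $N_i$. The only cosmetic slip is the phrase ``equals $\Phi^{-1}(\ket{\mu,\mathbf{s}})$ \ldots rescaled,'' which should read that the resulting basis vector is $v_{S'}$ with $\lambda_{S'}=\mu$; the mathematics is unaffected.
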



\begin{proof}
  It suffices to check that the isomorphism of vector spaces $v_{S}\mapsto \ket{\lambda_S,\mathbf{s}}$ is $U_q(\mathfrak{sl}_{\infty})$-equivariant, which is easily checked.
\end{proof}

The vector $\ket{\emptyset,\mathbf{s}}$ then corresponds to the vector $v_{S^0}$ where $S^0$ is the symbol such that its $i$-th row is the sequence $(j)_{j\leq s_i}$.

Finally, note that detecting a cylindrical multipartition is easy from its corresponding symbol: the $l$-partition $\lambda_s$ is cylindrical if and only if the symbol $S$ is standard, that is if $\beta_{i,k}\geq \beta_{i,k+1}$ for all $1 \leq k < l$ and $i \leq s_{k+1}$.

\subsection{Comparison with cellular characters}
\label{sec:comparison}

~

\boitegrise{{\bf Hypothesis:} We suppose that the parameter $c$ is such that there exists $k\in \mathbb{C}$ with $lk_i\in \mathbb{Z}+k$ for every $i$ and that $c_0=-\frac{1}{l}$.}{0.8\textwidth}


We set $\mathbf{k'}=(k'_1,\ldots,k'_l)\in\mathbb{Z}^l$ where $k'_{c} = lk_{1-c}-k$. Therefore, given a standard tableau $\mathfrak{t}$ of shape $\lambda$, the shift of the Jucys--Murphy element $J_i-k$ acts on the line $D_{\mathfrak{t}}$ by multiplication by the charged content of $\mathfrak{t}^{-1}(i)$.

\begin{thm}
  \label{thm:JM_monomial}
  The set of Jucys--Murphy cellular characters of $G(l,1,n)$ for the parameter $c$ coincide with the characters obtained by evaluation at $q=1$ of quasimonomial vectors of $\mathcal{F}_{\mathbf{k'}}^n$.
\end{thm}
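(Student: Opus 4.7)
By \cref{prop:JM-char-truncated-induction}, it suffices to show that the characters of the representations in $\mathcal{E}_n$ coincide with the characters $\gamma_v$ attached to quasimonomial vectors $v\in\mathcal{F}_{\mathbf{k'}}^n$. The natural approach is an induction on $n$, matching the inductive construction $V\mapsto\xi-\Ind V$ of the set $\mathcal{E}_n$ with the inductive construction $v\mapsto F_i v$ of quasimonomial vectors, under the correspondence $\xi \leftrightarrow i+k$.

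\textbf{Key correspondence.} The starting point is to translate the $J_n$-eigenvalues into charged contents relative to $\mathbf{k'}$. For a standard tableau $\mathfrak{t}$ of shape $\lambda$ with $\mathfrak{t}^{-1}(n)=(a,b,c)$, the formula recalled in \cref{sec:gl1n} combined with $c_0=-\frac{1}{l}$ gives
\[
  J_n|_{D_{\mathfrak{t}}} \;=\; l\bigl(k_{1-c}-c_0(b-a)\bigr) \;=\; lk_{1-c}+(b-a) \;=\; k + (b-a+k'_c),
\]
so $J_n$ has eigenvalue $\xi$ on the summand $V_\mu\subset\Ind V_\lambda$ associated with an added box $(a,b,c)$ if and only if $\xi-k$ equals the charged content of that box. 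In particular, the hypothesis forces $\xi\in k+\mathbb{Z}$ for any $\xi$ with $\xi-\Ind\neq 0$, so without loss of generality $i:=\xi-k\in\mathbb{Z}$.

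\textbf{Base case and induction.} For $n=0$ the only quasimonomial vector is $\ket{\emptyset,\mathbf{k'}}$, whose evaluation at $q=1$ is the character of the trivial representation, matching $\mathcal{E}_0=\{\mathbf{1}\}$. Assume the claim for $n-1$ and let $v'\in\mathcal{F}_{\mathbf{k'}}^{n-1}$ be quasimonomial, so that $\gamma_{v'}$ is the character of some $V'\in\mathcal{E}_{n-1}$. Writing $v'=\sum_\lambda a_\lambda(q)\ket{\lambda,\mathbf{k'}}$ with $a_\lambda(q)\in\mathbb{N}[q,q^{-1}]$ and $a_\lambda(1)=[V':V_\lambda]$, \cref{prop:action-fock} yields
\[
  F_i v' \;=\; \sum_{\lambda}a_\lambda(q)\sum_{\mu}q^{N_i^{\succ}(\lambda,\mu)}\ket{\mu,\mathbf{k'}},
\]
the inner sum ranging over $\mu$ obtained from $\lambda$ by adding a box of charged content $i$. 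Since the coefficients still lie in $\mathbb{N}[q,q^{-1}]$, evaluating at $q=1$ gives
\[
  \gamma_{F_iv'} \;=\; \sum_{\lambda}a_\lambda(1)\sum_{\mu}\chi_\mu,
\]
which by the correspondence above is exactly the character of $(i+k)-\Ind V'$. Conversely, every element of $\mathcal{E}_n$ is of the form $\xi-\Ind V'$ for some $V'\in\mathcal{E}_{n-1}$ and some $\xi=i+k$ with $i\in\mathbb{Z}$, and applying $F_i$ to the quasimonomial vector representing $V'$ (which exists by induction) produces the required quasimonomial vector. The equivalence between $F_iv'=0$ and $(i+k)-\Ind V'=0$ follows from the positivity of the coefficients $a_\lambda(q)$ and the fact that both vanishings are detected by the absence of addable boxes of charged content $i$ on any $\lambda$ with $a_\lambda(1)>0$.

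\textbf{Expected obstacle.} The computation is essentially bookkeeping once the dictionary $\xi\leftrightarrow i+k$ is set up; the only point requiring care is the bidirectional matching of the two inductive processes, in particular the verification that the truncated induction and the operator $F_i$ at $q=1$ produce \emph{the same} multiplicities (not merely proportional ones) and that vanishing on one side is equivalent to vanishing on the other. Both reduce to the positivity of the coefficients appearing in \cref{prop:action-fock} and to the explicit identification of the $J_n$-spectrum with charged contents derived above.
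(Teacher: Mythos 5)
Your proof is correct and follows essentially the same route as the paper's: the paper likewise identifies the action of $F_i$ on quasimonomial vectors with the truncated induction (via the eigenvalue computation for $J_n$ under the hypothesis $c_0=-\frac{1}{l}$), runs the induction from the base case $\ket{\emptyset,\mathbf{k'}}$, and concludes by \cref{prop:JM-char-truncated-induction}. Your version merely spells out the shift $\xi = i+k$ and the multiplicity/vanishing bookkeeping that the paper compresses into one sentence (and handles via the remark on shifting the Jucys--Murphy elements).
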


\begin{proof}
  If $v\in\mathcal{F}_{\mathbf{k'}}^{n-1}$ and $v'=F_i\cdot v \in \mathcal{F}_{\mathbf{k'}}^{n}$ are quasimonomial vectors, then it follows from the definition of the action and the particular choice of parameter that $\gamma_{v'} = i-\Ind_{G(l,1,n-1)}^{G(l,1,n)}(\gamma_v)$. Therefore the characters of $G(l,1,n)$ obtained from specializations at $q=1$ of quasimonomial vectors coincide with the set $\mathcal{E}_n$ introduced in \cref{sec:gl1n}, since the only quasimonomial vector of $\mathcal{F}_{\mathbf{k'}}^0$ is $\ket{\emptyset,\mathbf{k'}}$. Therefore, we conclude using \cref{prop:JM-char-truncated-induction}. 
\end{proof}

Using \cref{prop:JM_sums_of_CM}, we deduce that the characters obtained by evaluation at $q=1$ of quasimonomial vectors of $\mathcal{F}_{\mathbf{k'}}^n$ are sums of Calogero-Moser $c$-cellular characters. The second author has also formulated the following conjecture concerning vectors of the canonical basis of $V_{\mathbf{s}}$:

\begin{conj}
  The set of Calogero--Moser cellular characters of $G(l,1,n)$ for the parameter $c$ coincide with the characters obtained by evaluation at $q=1$ of the vectors of height $n$ of the canonical basis of $V_{\mathbf{s}}$.
\end{conj}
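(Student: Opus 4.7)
The plan is to combine an upper bound coming from quasimonomial vectors with a categorical identification of canonical basis vectors, and to match these with the Bonnafé--Rouquier geometric description of Calogero--Moser cellular characters.

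First, by \cref{thm:JM_monomial} and \cref{prop:JM_sums_of_CM}, the specialization at $q=1$ of any quasimonomial vector of $\mathcal{F}_{\mathbf{k'}}^n$ is a sum of CM cellular characters. Any canonical basis vector $G(\lambda,\mathbf{k'})$ can be expressed as a $\mathbb{Z}[q,q^{-1}]$-linear combination of monomial vectors by Lusztig's bar-invariance algorithm (cf.\ the Leclerc--Thibon recipe in level one and Uglov's extension to higher levels), and monomial vectors are in particular quasimonomial. Consequently, the specialization at $q=1$ of $G(\lambda,\mathbf{k'})$ is already known to be a non-negative integer combination of CM cellular characters. The content of the conjecture is therefore that these combinations are trivial (a single character each) and that together they exhaust all CM cellular characters.

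Next, I would invoke the categorification of $V_{\mathbf{k'}}$ by the cyclotomic Cherednik category $\mathcal{O}_c(G(l,1,n))$ at $t=1$, due to Shan--Vasserot, Rouquier--Shan--Varagnolo--Vasserot and Losev, valid under our hypothesis $c_0 = -1/l$ and $lk_i \in \mathbb{Z}+k$. Under this identification canonical basis vectors correspond to the classes of simple modules $L(\lambda)$ for cylindrical $\lambda$, and monomial vectors to standard modules. A preliminary numerical sanity check is to verify that the number of cylindrical $l$-partitions of $n$ equals the number of CM cells of $G(l,1,n)$ for the given $c$, an equality that should be provable directly via the combinatorics of Uglov's $l$-symbols and the known description of CM families through shifted symbols.

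The main obstacle is the final bridge between $\mathcal{O}_c$ at $t=1$ and the Gaudin-algebra definition of CM cellular characters at $t=0$. Bonnafé--Rouquier construct CM cellular characters via the ramification of the Calogero--Moser cover over the generic point of $V/W$, equivalently via simple modules of $\Gau_c(W)$; the category $\mathcal{O}_c$ lives on the $t=1$ side. Bridging the two would proceed through Bezrukavnikov--Etingof's identification of the center of the rational Cherednik algebra at $t=0$ with the coordinate ring of the Calogero--Moser space, together with compatibility of KZ-type functors on both sides. The hard part is that $\Gau_c(W)$ is not split over $\mathbb{C}(V)$ and its simple modules need not be one-dimensional, so even if a Grothendieck-group matching is established, promoting it to an equality of individual characters (and not merely sums of them) requires a genuinely finer control of the generic Gaudin spectrum that goes beyond the cases (symmetric groups, some dihedral groups) treated so far in the literature.
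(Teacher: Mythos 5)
This statement is a \emph{conjecture} in the paper: the authors offer no proof of it, and to date it is open. Your proposal is likewise not a proof but a research plan, and you yourself flag its central missing step; still, it is worth pointing out where the plan breaks down concretely. The first step already fails. From \cref{thm:JM_monomial} and \cref{prop:JM_sums_of_CM} you know that the $q=1$ evaluation of a \emph{quasimonomial} vector is a sum of Calogero--Moser cellular characters. But writing $G(\lambda,\mathbf{k'})$ as a $\mathbb{Z}[q,q^{-1}]$-linear combination of monomial vectors does not transfer this property: the coefficients in such an expansion can be negative, and ``sum of cellular characters'' is not closed under subtraction, so you cannot conclude that $G(\lambda,\mathbf{k'})|_{q=1}$ is a non-negative combination of cellular characters, let alone a single one. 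The only way the paper itself gets this kind of conclusion (in \cref{thm:constructible_CM}, for $l=2$) is by proving that the canonical basis vectors are \emph{themselves} monomial (\cref{cor:canonical_is_monomial}), and the paper's Section~\ref{sec:conjectures} exhibits an explicit counterexample for $l=5$, $\mathbf{s}=(3,2,2,1,0)$, $\lambda=((3),(3),(1),\emptyset,\emptyset)$, showing that this route cannot work in general.

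The second part of your plan, identifying canonical basis vectors with simple modules in cyclotomic category $\mathcal{O}$ at $t=1$ and then bridging to the $t=0$ Gaudin-algebra picture via Bezrukavnikov--Etingof and KZ-type functors, is exactly the unsolved core of the problem, as you acknowledge. No such bridge is currently available: the Gaudin algebra $\Gau_c(W)$ is not split over $\mathbb{C}(V)$, its simple modules need not be one-dimensional, and there is no known functorial comparison between its generic spectrum and category $\mathcal{O}_c$ at $t\neq 0$ that would match individual simples with individual cellular characters. Even your ``numerical sanity check'' (counting cylindrical $l$-partitions against CM cells) is itself unproven. In short: the statement remains a conjecture, your outline identifies plausible ingredients but proves none of the nontrivial implications, and its one concrete deduction (positivity via expansion into monomial vectors) is invalid.
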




\section{The type $B$ and Lusztig's constructible characters}
\label{sec:typeB}

In this Section, we take $l=2$ so that the complex reflection group $G(l,1,n)$ is isomorphic to the Weyl group of type $B_n$. We aim to compare, using results of Leclerc--Miyachi \cite{leclerc-miyachi}, the Jucys--Murphy cellular characters and Lusztig's constructible characters.

Lusztig's constructible characters depend also on a choice of parameters for each conjugacy class of reflections. We will always take the parameter equal to $1$ for the conjugacy class $S_0$ and the parameter for the conjugacy class $S_1$ is denoted by $r$ and will be explicitely related to the multicharge $\mathbf{s}$.

\begin{thm}[{\cite[Theorem 10 and Proposition 4]{leclerc-miyachi}}]
  \label{thm:LM}
  Let $\mathbf{s}=(s_1,s_2)\in\mathbb{Z}^2$ with $s_1 \geq s_2$. Then the characters of the Weyl group of type $B$ obtained from specializations at $q=1$ of the vectors of the canonical basis of $V_{\mathbf{s}}$ coincide with Lusztig's constructible characters at parameter $r=s_1-s_2$. 

Moreover, every element in the canonical basis of $V_{\mathbf{s}}$ is of the following form:
\[
  G(\lambda,\mathbf{s}) = F_{i_k}^{(r_k)}\cdots F_{i_1}^{(r_1)}\cdot v_{S^0},
\]
where $i_j\in\mathbb{Z}$ and $r_j\in\{1,2\}$ for all $1 \leq j \leq k$.
\end{thm}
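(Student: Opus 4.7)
The plan is to prove both parts of the theorem by induction on $n$, exploiting the key feature of level $l=2$: by \cref{diffcon}, every bipartition admits at most two addable (resp.\ two removable) nodes of any given charged content.

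For the second assertion, I would first observe that $F_i^r \cdot \ket{\mu, \mathbf{s}} = 0$ whenever $r \geq 3$, since such a term would require adding three nodes of the same charged content to a bipartition. Hence only the divided powers $F_i^{(1)}$ and $F_i^{(2)}$ act nontrivially on $\mathcal{F}_{\mathbf{s}}$. Given a cylindrical bipartition $\lambda$, I would choose a removable node $\gamma$ of charged content $i$ such that $\mu = [\lambda]\setminus\{\gamma\}$ remains cylindrical, apply the induction hypothesis to obtain an expression for $G(\mu, \mathbf{s})$ as a product of divided powers of the required form, and use the Lusztig--Kashiwara construction of the canonical basis to show that, for an appropriate $r \in \{1,2\}$,
\[
  F_i^{(r)} \cdot G(\mu, \mathbf{s}) = G(\lambda, \mathbf{s}) + \sum_\nu c_\nu(q) G(\nu, \mathbf{s}),
\]
where $\nu$ ranges over cylindrical bipartitions strictly smaller than $\lambda$ in dominance order and $c_\nu(q) \in \mathbb{Z}[q, q^{-1}]$. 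A nested induction on $n$ and on dominance order then corrects the lower terms, each of which is already of the prescribed form by induction, yielding the desired expression for $G(\lambda, \mathbf{s})$.

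For the first assertion, I would rely on the inductive characterization of Lusztig's constructible characters of $W(B_n)$ at unequal parameters $(1, r)$: they are produced from the trivial character of $W(B_0)$ by iterated $j$-induction (truncated induction) along the parabolic chain $W(B_0) \subset W(B_1) \subset \cdots \subset W(B_n)$. On the Fock space side, the specialization at $q = 1$ of $F_i^{(s)} \cdot G(\mu, \mathbf{s})$, for $s \in \{1,2\}$, is a character of the form $\sum_\lambda d_\lambda \chi_\lambda$, where $\lambda$ runs over cylindrical bipartitions obtained from $\mu$ by adding $s$ nodes of charged content $i$. The crucial step is to show that this operation coincides with a $j$-induction from $W(B_{n-s})$ to $W(B_n)$: all bipartitions with $d_\lambda \neq 0$ share a common, minimal $a$-value among those appearing in the relevant induced character. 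The identification $r = s_1 - s_2$ emerges from Lusztig's explicit formulas for the $a$-invariant in type $B$ in terms of symbols, since the symbol attached to $\mathbf{s}$ has defect $s_1-s_2$.

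The main obstacle is the combinatorial matching between the divided-power action on the Fock space and $j$-induction for $W(B_n)$: one must show that the bipartitions hit by $F_i^{(s)} \cdot G(\mu, \mathbf{s})$ at $q=1$ are precisely those minimizing Lusztig's $a$-invariant among the bipartitions in the relevant induced set, and that the combinatorics of adding $s \in \{1,2\}$ nodes of charged content $i$ matches the $a$-value jumps predicted by $r$-symbols. This reduces to nontrivial combinatorial identities on the $a$-function for symbols of defect $r$. Once established, the base case ($n=0$, trivial character of the trivial group) is immediate, and the inductive step proves both assertions simultaneously.
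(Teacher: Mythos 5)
This theorem is quoted from Leclerc--Miyachi (\cite[Theorem 10 and Proposition 4]{leclerc-miyachi}); the paper gives no proof of it, so your attempt must be measured against their argument. Against that standard, your sketch of the second assertion has a genuine, self-defeating gap. Any vector of the form $F_{i_k}^{(r_k)}\cdots F_{i_1}^{(r_1)}\cdot v_{S^0}$ is automatically bar-invariant (the $F_i^{(r)}$ and $v_{S^0}$ are fixed by the bar involution), so by uniqueness of the canonical basis the \emph{entire} content of the theorem is that for a suitably chosen monomial one has $F_{i_k}^{(r_k)}\cdots F_{i_1}^{(r_1)}\cdot v_{S^0} \equiv \ket{\lambda,\mathbf{s}} \bmod q\mathcal{F}_{\mathbf{s},R}$, i.e.\ that \emph{no} lower-order corrections occur. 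Your plan concedes corrections $c_\nu(q)\neq 0$ and then ``corrects'' them by subtracting $c_\nu(q)G(\nu,\mathbf{s})$; but then $G(\lambda,\mathbf{s})$ is expressed as a $\mathbb{Z}[q,q^{-1}]$-combination of monomials, which is trivially true of every vector of $V_{\mathbf{s}}$ and is strictly weaker than the claim that $G(\lambda,\mathbf{s})$ \emph{is} a single monomial. The corrections really are nonzero in general: peeling off an arbitrary removable node that preserves cylindricity (e.g.\ removing one node of $((1),(1))$ at $\mathbf{s}=(s,s)$ gives $G(\mu,\mathbf{s})=F_s\cdot v_{S^0}$, and $F_s\cdot G(\mu,\mathbf{s})=[2]\,G(\lambda,\mathbf{s})$, already not unitriangular) does not produce the canonical basis element. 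Leclerc--Miyachi's proof hinges on a \emph{specific} monomial read off from the symbol of $\lambda$ (pairing the entries of its two rows in a prescribed order) together with a combinatorial induction showing that precisely this monomial is unitriangular with off-diagonal coefficients in $q\mathbb{Z}[q]$; that choice and that verification are exactly what is missing from your argument.

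The first assertion has a similar status: the step you yourself flag as ``the main obstacle'' --- that the $q=1$ specialization of $F_i^{(s)}\cdot G(\mu,\mathbf{s})$ realizes a truncated induction, i.e.\ that the bipartitions occurring share the minimal $a$-value --- is the whole theorem, not a reduction of it. Moreover, your inductive characterization of Lusztig's constructible characters by iterated $j$-induction along the chain $W(B_0)\subset W(B_1)\subset\cdots\subset W(B_n)$ is not Lusztig's definition: constructible characters are generated by truncated induction from \emph{all} proper parabolic subgroups (of type $B_k\times A_{m_1}\times\cdots$) together with tensoring by the sign character, so this characterization would itself require proof. Leclerc--Miyachi avoid this entirely: they compare their closed monomial formula, expanded at $q=1$, directly with Lusztig's explicit determination of the constructible characters of $W(B_n)$ at unequal parameters in terms of symbols. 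Without the explicit formula from the second assertion, your route to the first assertion has no engine.
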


Because of the divided powers, it is not clear if the vectors of the canonical basis are monomial or not.

\begin{prop}
  \label{prop:simplify_canonical}
  Let $\mathbf{s}=(s_1,s_2)\in\mathbb{Z}^2$ with $s_1 > s_2$. For any $n\in\mathbb{N}$, any $k \in \mathbb{N}$, any $i_1,\ldots,i_k\in\mathbb{Z}$ and any $r_1,\ldots,r_k\in\{1,2\}$ such that $\sum_{i=1}^kr_i = n$ there exist $j_1,\ldots,j_n$ such that
  \[
    F_{i_k}^{(r_k)}\cdots F_{i_1}^{(r_1)}\cdot v_{S^0} = F_{j_n}\cdots F_{j_1}\cdot v_{S^0}.
  \]
\end{prop}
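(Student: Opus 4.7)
The plan is to induct on the number of divided powers in the expression, namely on $|\{j : r_j = 2\}|$. If this is zero, the expression is already a monomial. Otherwise, locate the rightmost divided power $F_{i_j}^{(2)}$; to its right sits a monomial $w = F_{i_{j-1}}\cdots F_{i_1}v_{S^0}$, and the induction reduces to the following sub-lemma: for any monomial $w$ of length $m$ and any $i \in \mathbb{Z}$, the vector $F_i^{(2)}\cdot w$ admits an expression as a monomial of length $m+2$. Granting the sub-lemma, we substitute the resulting monomial back into the original expression, strictly decreasing the number of divided powers, and close the outer induction.

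The sub-lemma rests on three ingredients, each using $s_1 > s_2$. First, $F_i^{(2)}\cdot v_{S^0} = 0$ for every $i$, because the empty $2$-partition admits at most one addable box of any given content (the two addable contents $s_1, s_2$ being distinct). Second, $F_i^3 = 0$ on the entire Fock space $\mathcal{F}'_{\mathbf{s}}$: since $F_i^2 = 0$ on each level-$1$ factor $V(\Lambda_{s_k})$, a short computation with $\Delta(F_i)^3$ shows the third power vanishes on the tensor product, and hence $F_i^{(2)}F_i = \tfrac{1}{[2]}F_i^3 = 0$. Third, the quantum Serre relation yields $F_iF_jF_i = F_i^{(2)}F_j + F_jF_i^{(2)}$ for $|i-j|=1$, and commutation gives $F_i^{(2)}F_j = F_jF_i^{(2)}$ for $|i-j|>1$. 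The sub-lemma is then proved by induction on $m$: the case $m = 0$ is the first ingredient, realised by any length-$2$ null monomial beginning with $F_{j'}v_{S^0} = 0$ for some $j' \notin \{s_1, s_2\}$. For $w = F_{j_m}w'$ with $m \geq 1$, if $j_m = i$ the second ingredient gives $F_i^{(2)}w = 0$; if $|j_m-i|>1$, the commutation reduces to $F_{j_m}(F_i^{(2)}w')$ and the inductive hypothesis applies to $w'$; and in the critical case $|j_m-i|=1$, Serre produces
\[
  F_i^{(2)}w \;=\; F_iF_{j_m}F_iw' \;-\; F_{j_m}(F_i^{(2)}w').
\]
Iterating on the residual term progressively pushes the divided power to the right until it annihilates via the first or second ingredient, expressing $F_i^{(2)}w$ as an alternating telescoping sum of monomials.

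The main obstacle is this final step: showing that the alternating sum of monomials is itself a single monomial. Here we exploit the rigid basis-level action: the coproduct calculation (using that $F_i^{(2)}$ vanishes on each level-$1$ factor) gives $F_i^{(2)}\cdot v_\beta\otimes v_\gamma = q^{-1}F_iv_\beta\otimes F_iK_iv_\gamma = F_iv_\beta\otimes F_iv_\gamma$, the $q^{-1}$ being absorbed against $K_iv_\gamma = qv_\gamma$ whenever the tensor is nonzero. Thus $F_i^{(2)}$ sends each basis vector to either zero or a single basis vector. This rigidity pins down the support of $F_i^{(2)}w$ completely, and combined with the $\mathbb{N}[q,q^{-1}]$-positivity of monomial coefficients it forces the telescoping sum to collapse: concretely, one identifies it either with the monomial obtained by inserting the pair $(F_i, F_i)$ around a well-chosen adjacent letter of $w$ (commuting freely past letters far from $i$), or with any length-$(m+2)$ null monomial when the sum vanishes.
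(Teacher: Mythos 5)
Your outer reduction (induct on the number of divided powers, peel off the rightmost $F_i^{(2)}$, and prove the sub-lemma that $F_i^{(2)}$ applied to a monomial vector is again a monomial vector) is sound, and your three ingredients --- $F_i^{(2)}v_{S^0}=0$, $F_i^3=0$ on the Fock space, and the rearranged Serre relation --- are correct; this matches the skeleton of the paper's own proof, as does your case analysis on the adjacent letter $j_m$. The computation $\Delta(F_i^{(2)})=q^{-1}F_i\otimes F_iK_i$ on the tensor product of two level-one modules, hence that $F_i^{(2)}$ sends each basis vector to a basis vector or to zero, is also correct. The gap is in the critical case $\abs{j_m-i}=1$: after writing $F_i^{(2)}F_{j_m}w' = F_iF_{j_m}F_iw' - F_{j_m}F_i^{(2)}w'$, you iterate Serre on the residual term and assert that the resulting alternating sum of monomials ``collapses'' to a single monomial by ``rigidity plus positivity''. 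That is precisely the point requiring proof, and neither ingredient delivers it: positivity of the individual monomials in an alternating sum does not preclude cancellation patterns, and knowing that $F_i^{(2)}$ acts basis-vector-to-basis-vector says nothing about \emph{which} symbols occur in the support of $w$ or $w'$, which is what you would need in order to identify the sum with ``inserting $(F_i,F_i)$ around a well-chosen letter''. As written, the final step is an assertion, not an argument.

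The missing idea --- the one the paper uses --- is a structural fact about supports of monomial vectors: if $w'=F_{j_{m-1}}\cdots F_{j_1}v_{S^0}$, then every symbol $\binom{\beta}{\gamma}$ occurring in $w'$ with nonzero coefficient has the same multiset of entries (each $F_j$ deterministically replaces one entry $j$ by $j+1$), hence the same $\beta\cap\gamma$ and $\beta\cup\gamma$. Granting this, no telescoping is ever needed: suppose $F_i^{(2)}F_{j_m}w'\neq 0$ and $j_m=i-1$. If some symbol of $w'$ had $i\in\beta\cap\gamma$, then all of them would, and then $F_{i-1}$ would annihilate $w'$ (a box of content $i-1$ cannot be added to either row when $i$ lies in both), contradicting nonvanishing; so $i\notin\beta\cap\gamma$ for every symbol of $w'$, hence $F_i^{(2)}w'=0$, and the Serre identity terminates after a single application, giving $F_i^{(2)}F_{j_m}w' = F_iF_{j_m}F_iw'$ exactly. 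The case $j_m=i+1$ is handled symmetrically by showing $i+1\in\beta\cup\gamma$ for every symbol of $w'$. Your rigidity observation would combine well with this lemma (either $F_i^{(2)}$ kills the whole monomial, or it acts injectively and coefficient-preservingly on its support), but without the support-constancy fact your proof of the sub-lemma, and hence of the proposition, is incomplete.
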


\begin{proof}
  We proceed by induction on $n$. There is nothing to prove if $n=0$ or if $n=1$. Note that since $s_1>s_2$, we have $F_i^{(2)}\cdot v_{S^0}=0$, which also settles the case $n=2$.

  Now suppose that $n\geq 3$ and that the result is proven for every $l \leq n-1$. Fix $i_1,\ldots,i_k\in\mathbb{Z}$ and $r_1,\ldots,r_k\in\{1,2\}$ such that $\sum_{i=1}^kr_i = n$. Of course, we suppose that $F_{i_{k}}^{(r_{k})}\cdots F_{i_1}^{(r_1)}\cdot v_{S^0}$ is non-zero, otherwise there is nothing to show.

  If $r_k=1$, then the induction hypothesis gives us $j_1,\ldots,j_{n-1}$ such that
  \[
    F_{i_{k-1}}^{(r_{k-1})}\cdots F_{i_1}^{(r_1)}\cdot v_{S^0} = F_{j_{n-1}}\cdots F_{j_1}\cdot v_{S^0},
  \]
  and acting by $F_{i_k}$ shows that
  \[
    F_{i_k}F_{i_{k-1}}^{(r_{k-1})}\cdots F_{i_1}^{(r_1)}\cdot v_{S^0} = F_{i_k}F_{j_{n-1}}\cdots F_{j_1}\cdot v_{S^0}
  \]
  has the desired form.

  Finally, suppose that $r_k=2$. We use the induction hypothesis to obtain $j_1,\ldots,j_{n-2}$ such that
  \[
    F_{i_{k-1}}^{(r_{k-1})}\cdots F_{i_1}^{(r_1)}\cdot v_{S^0} = F_{j_{n-2}}\cdots F_{j_1}\cdot v_{S^0},
  \]
  and we now discuss on the value of $j_{n-2}$. If $j_{n-2}\not\in\{i_k-1,i_k,i_k+1\}$ then $F_{i_k}^{(2)}$ and $F_{j_{n-1}}$ commute and then
  \[
    F_{i_k}^{(2)}F_{i_{k-1}}^{(r_{k-1})}\cdots F_{i_1}^{(r_1)}\cdot v_{S^0} = F_{j_{n-2}}F_{i_k}^{(2)}F_{j_{n-3}}\cdots F_{j_1}\cdot v_{S^0}.
  \]
  We once again apply the induction hypothesis to $F_{i_k}^{(2)}F_{j_{n-3}}\cdots F_{j_1}\cdot v_{S^0}$ to reach the conclusion.

  Since we have supposed that $F_{i_k}^{(2)}F_{i_{k-1}}^{(r_{k-1})}\cdots F_{i_1}^{(r_1)}\cdot v_{S^0}$ is non-zero, we have $j_{n-2}\neq i_k$. Indeed, $F_{i_k}^{3}$ acts by zero on the Fock space by Remark \ref{diffcon}. 

  If $j_{n-2} = i_{k}-1$, we want to use the quantum Serre relation $F_{i_k}^{(2)}F_{j_{n-2}} = F_{i_k}F_{j_{n-2}}F_{i_k} - F_{j_{n-2}}F_{i_k}^{(2)}$ and we show that $F_{i_k}^{(2)}$ acts by zero on $F_{j_{n-3}}\cdots F_{j_1}\cdot v_{S^0}$. If a symbol $S=\binom{\beta}{\gamma}$ is such that $v_S$ appears in $F_{j_{n-3}}\cdots F_{j_1}\cdot v_{S^0}$ with a non zero coefficient, then the value of $\beta\cap \gamma$ and $\beta\cup \gamma$ does not depend on the chosen symbol. Then $i_k \not \in \beta\cap \gamma$, otherwise $F_{i_{k-1}}$ would act by zero on $F_{j_{n-3}}\cdots F_{j_1}\cdot v_{S^0}$. Therefore $F_{i_k}^{(2)}$ acts by zero on $F_{j_{n-3}}\cdots F_{j_1}\cdot v_{S^0}$ and we have
  \[
    F_{i_k}^{(2)}F_{j_{n-2}}F_{j_{n-3}}\cdots F_{j_1}\cdot v_{S^0} = F_{i_k}F_{j_{n-2}}F_{i_k}F_{j_{n-3}}\cdots F_{j_1}\cdot v_{S^0},
  \]
  which has the expected form.

  If $j_{n-2} = i_k+1$, we can proceed similarly: we show that  $i_k+1 \in \beta\cup \gamma$ and then $F_{i_k}^{(2)}$ acts by zero on $F_{j_{n-3}}\cdots F_{j_1}\cdot v_{S^0}$.
\end{proof}

\begin{ex}
  If we take the multicharge $\mathbf{s}=(1,0)$ then the vector $F_{0}^{(2)}F_{1}\cdot v_{S^0}$ is an element of the canonical basis. Since $F_0^{(2)}$ acts by zero on $v_{S^0}$, we have $F_{0}^{(2)}F_{1}\cdot v_{S^0}=F_{0}F_{1}F_{0}\cdot v_{S^0}$, and we obtain a monomial vector.
\end{ex}

If the multicharge $\mathbf{s}$ is of the form $(s,s)$, we have almost the same result.

\begin{prop}
  Let $\mathbf{s}=(s,s)\in\mathbb{Z}^2$. For any $n\in\mathbb{N}$, any $k \in \mathbb{N}$, any $i_1,\ldots,i_k\in\mathbb{Z}$ and any $r_1,\ldots,r_k\in\{1,2\}$ such that $\sum_{i=1}^kr_i = n$ and $(r_1,\ldots,r_k)\neq (2,\ldots,2)$, there exist $j_1,\ldots,j_n$ such that
  \[
    F_{i_k}^{(r_k)}\cdots F_{i_1}^{(r_1)}\cdot v_{S^0} = F_{j_n}\cdots F_{j_1}\cdot v_{S^0}.
  \]

  If $n=2k$ is even and $(r_1,\ldots,r_k)=(2,\ldots,2)$ then the vector $F_{i_k}^{(2)}\cdots F_{i_1}^{(2)}\cdot v_{S^0}$ is either zero or of the form $v_S$ for $S$ a symbol of the form $\binom{\beta}{\beta}$.
\end{prop}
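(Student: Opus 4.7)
The plan is to prove part~2 first by a short induction on $k$, then part~1 by strong induction on $n$, adapting the argument of \cref{prop:simplify_canonical} to handle the genuinely new case that arises when $s_1 = s_2$. For part~2, the inductive step is a direct computation: if $v_T = v_\beta \otimes v_\beta$, then using the coproduct $\Delta(F_i) = F_i \otimes K_i + 1\otimes F_i$ together with $F_i^2 = 0$ on $V(\Lambda_s)$ (which forces $\Delta(F_i)^3 = 0$ on $V(\Lambda_s)^{\otimes 2}$), one checks case-by-case that $F_i^{(2)} v_T = v_{\beta'} \otimes v_{\beta'}$ when $F_i v_\beta = v_{\beta'} \ne 0$ (the two contributions to $F_i^2 v_T$ combine into a factor $[2]$ cancelling the divided power), and $F_i^{(2)} v_T = 0$ otherwise; thus the symmetric form is preserved.

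For part~1, the base cases $n \leq 2$ are immediate once $(r_1)=(2)$ is excluded. In the inductive step I split on $r_k$. If $r_k = 2$, the hypothesis forces $(r_1,\ldots,r_{k-1})$ also not to be all $2$'s, so IH part~1 rewrites the inner vector as a monomial $w = F_{j_{n-2}}\cdots F_{j_1} v_{S^0}$, and the three-case analysis of Case~B of \cref{prop:simplify_canonical} applies verbatim---commute if $|i_k - j_{n-2}| > 1$, vanish via $F^3 = 0$ if $i_k = j_{n-2}$, or use Serre together with the weight-invariance collapse if $|i_k - j_{n-2}| = 1$. The weight-invariance argument needs no change, since the assertion that all symbols $\binom{\beta}{\gamma}$ appearing in a fixed weight space share the same $\beta \cap \gamma$ and $\beta \cup \gamma$ only relies on the weight recovering the multiplicity function $c(j) = |\{k \in \{1,2\} : j \in \beta_k\}|$ (via $K_j$ acting by $q^{c(j)-c(j+1)}$, together with the boundary condition $c(j) = 2$ for $j \ll 0$), and this is independent of whether $s_1 > s_2$. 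When $r_k = 1$ and $(r_1,\ldots,r_{k-1}) \ne (2,\ldots,2)$, IH part~1 on the inner vector and then prepending $F_{i_k}$ concludes.

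The genuinely new case is $r_k = 1$ with $(r_1,\ldots,r_{k-1}) = (2,\ldots,2)$. I set $w = F_{i_{k-2}}^{(2)} \cdots F_{i_1}^{(2)} v_{S^0}$ (with $w = v_{S^0}$ if $k = 2$), which by part~2 is either zero or of the form $v_{\binom{\beta''}{\beta''}}$, and the non-vanishing of $F_{i_{k-1}}^{(2)} w$ forces $i_{k-1} \in \beta''$ and $i_{k-1}+1 \notin \beta''$. I then split on $|i_k - i_{k-1}|$: if $i_k = i_{k-1}$ then $v = 0$ via $F^3 = 0$; if $|i_k - i_{k-1}| = 1$, the combinatorial constraint on $\beta''$ yields $F_{i_k} v_{\beta''} = 0$ (either $i_k = i_{k-1}+1 \notin \beta''$, or $i_k + 1 = i_{k-1} \in \beta''$), hence $F_{i_k} w = 0$, and the Serre relation $F_{i_k} F_{i_{k-1}}^{(2)} = F_{i_{k-1}} F_{i_k} F_{i_{k-1}} - F_{i_{k-1}}^{(2)} F_{i_k}$ collapses to $v = F_{i_{k-1}} F_{i_k} (F_{i_{k-1}} w)$, where $F_{i_{k-1}} w$ at degree $2k-3 < n$ is a monomial by IH; if $|i_k - i_{k-1}| > 1$, commuting gives $v = F_{i_{k-1}}^{(2)} (F_{i_k} w)$, IH rewrites $F_{i_k} w$ as a monomial, and the residual ``single $F^{(2)}$ on top of a monomial'' is reduced by exactly the Case~B sub-analysis above (applied this time at middle degree $n - 2 < n$, so IH suffices). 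The main obstacle is precisely the $|i_k - i_{k-1}| = 1$ subcase: the Serre relation a priori produces an additional $F^{(2)}$-correction term, and the delicate observation that rescues the argument is that the symmetric tower underneath $F_{i_{k-1}}^{(2)}$ is tailored to force $F_{i_k}$ to vanish on it, thereby eliminating that correction and leaving a clean monomial.
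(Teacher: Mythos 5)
Your proof follows the paper's argument essentially verbatim: the same induction on $n$, the same reduction to the genuinely new case $F_{i_k}F_{i_{k-1}}^{(2)}\cdots F_{i_1}^{(2)}\cdot v_{S^0}$, and the same three-way split (commute when $|i_k-i_{k-1}|>1$, vanishing via $F^3=0$ when $i_k=i_{k-1}$, quantum Serre relation when $|i_k-i_{k-1}|=1$), with part 2 supplying the symmetric form $v_{\binom{\beta''}{\beta''}}$ that forces the Serre correction term to vanish. If anything, your write-up is slightly sharper than the paper's at two points: you make explicit the coproduct computation behind part 2 (which the paper leaves implicit), and at the Serre step you kill exactly the correction term $F_{i_{k-1}}^{(2)}F_{i_k}w$ via $F_{i_k}w=0$, whereas the paper cites the (equivalent in this symmetric situation, but not literally relevant) vanishing $F_{i_k}^{(2)}w=0$.
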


\begin{proof}
  The strategy is the same as in the proof of \cref{prop:simplify_canonical}. We proceed by induction on $n$ and the only difference being the case of a non-zero vector of the form $F_{i_k}F_{i_{k-1}}^{(2)}\cdots F_{i_1}^{(2)} \cdot v_{S^0}$. Indeed, we cannot apply the induction hypothesis to $F_{i_{k-1}}^{(2)}\cdots F_{i_1}^{(2)} \cdot v_{S^0}$.

  As $F_{i_k}F_{i_{k-1}}^{(2)}\cdots F_{i_1}^{(2)} \cdot v_{S^0}$ is non-zero, we have $i_k\neq i_{k-1}$. If $i_k\neq i_{k-1}\pm 1$, then $F_{i_k}$ and $F_{i_{k-1}}^{(2)}$ commute and we rewrite $F_{i_{k-1}}^{(2)}F_{i_k}F_{i_{k-2}}^{(2)}\cdots F_{i_1}^{(2)} \cdot v_{S^0}$ as in the proof of \cref{prop:simplify_canonical}.

  If $i_k = i_{k-1}-1$, as we supposed that $F_{i_k}F_{i_{k-1}}^{(2)}\cdots F_{i_1}^{(2)} \cdot v_{S^0}$, the vector $F_{i_{k-2}}^{(2)}\cdots F_{i_1}^{(2)} \cdot v_{S^0}$ is equal to $v_S$ with $S=\binom{\beta}{\beta}$ with $i_{k}+1=i_{k-1}\in \beta$. Therefore $F_{i_k}^{(2)}\cdot v_S=0$ and
  \[
    F_{i_k}F_{i_{k-1}}^{(2)}\cdots F_{i_1}^{(2)} \cdot v_{S^0} = F_{i_{k-1}}F_{i_k}F_{i_{k-1}}F_{i_{k-2}}^{(2)}\cdots F_{i_1}^{(2)} \cdot v_{S^0},
  \]
  and we once again conclude by induction. The case $i_k=i_{k-1}+1$ is similar.
\end{proof}

The two previous propositions enable us te get rid of the divided powers in almost all cases.

\begin{cor}
  \label{cor:canonical_is_monomial}
  Let $\mathbf{s}=(s_1,s_2)\in\mathbb{Z}^2$ be a multicharge with $s_1\geq s_2$.
  \begin{enumerate}
  \item If $s_1>s_2$ then any vector of the canonical basis of $V_{\mathbf{s}}$ is monomial.
  \item If $s_1=s_2$ and then any vector of the canonical basis of $V_{\mathbf{s}}$ different from $v_S$with $S=\binom{\beta}{\beta}$ is monomial.
  \end{enumerate}
\end{cor}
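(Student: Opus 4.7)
The plan is a short bookkeeping argument combining \cref{thm:LM} with \cref{prop:simplify_canonical} and the subsequent proposition for the equal charge case; there is essentially no new content to produce beyond assembling what is already proved.

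First, I would invoke the second half of \cref{thm:LM}, which tells us that every canonical basis vector $G(\lambda,\mathbf{s})$ can be written in the form $F_{i_k}^{(r_k)}\cdots F_{i_1}^{(r_1)}\cdot v_{S^0}$ with each $r_j \in \{1,2\}$ and $r_1+\cdots+r_k = n$ where $n$ is the height of $G(\lambda,\mathbf{s})$. Thus the question reduces entirely to whether one can rewrite such an expression as a pure monomial $F_{j_n}\cdots F_{j_1}\cdot v_{S^0}$.

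For case (1), with $s_1 > s_2$, I would simply apply \cref{prop:simplify_canonical} to the expression produced above; this immediately yields the required monomial presentation and closes the case.

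For case (2), with $s_1 = s_2$, I would split on whether $(r_1,\ldots,r_k) = (2,\ldots,2)$. If some $r_j$ equals $1$, the proposition following \cref{prop:simplify_canonical} gives a monomial rewriting, so $G(\lambda,\mathbf{s})$ is monomial. If all $r_j = 2$, the same proposition tells us $G(\lambda,\mathbf{s})$ is either zero or of the form $v_S$ with $S=\binom{\beta}{\beta}$; since canonical basis vectors are non-zero, we are in the exceptional family of the statement. The only mildly delicate point — and the closest thing to an obstacle — is to make sure the dichotomy of the second proposition lines up exactly with the exception declared in the corollary, namely that the \emph{canonical} basis vectors landing in the $(2,\ldots,2)$ branch are precisely (a subset of) the $v_S$ with $S=\binom{\beta}{\beta}$, which is immediate once one recalls that such $v_S$ is already bar-invariant and congruent to $\ket{\lambda_S,\mathbf{s}}$ modulo $q\mathcal{F}_{\mathbf{s},R}$, hence agrees with $G(\lambda_S,\mathbf{s})$.
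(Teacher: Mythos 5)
Your proof is correct and follows exactly the route the paper intends: the corollary is stated there without a separate proof precisely because it is the assembly of \cref{thm:LM} with \cref{prop:simplify_canonical} and the equal-charge proposition that you carry out, including the observation that a nonzero all-divided-power product must land on a symbol $\binom{\beta}{\beta}$. Your closing remark that such $v_S$ actually are canonical basis vectors is extra (the corollary does not require it), but it is harmless and consistent with the paper.
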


\begin{thm}
  \label{thm:constructible_CM}
  Lusztig's constructible characters in type $B$ at parameter $r$ are sums of Calogero--Moser cellular characters at parameters $c$ with $c_0 = -1/2$ and $c_1 = -r/2$.
\end{thm}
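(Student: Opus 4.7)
The strategy is to translate the Calogero--Moser parameters into the multicharge formalism of \cref{sec:comparison} and then chain the results of \cref{sec:monomial-bases} and \cref{sec:typeB}: canonical basis vectors of the relevant Fock space become (almost always) quasimonomial, their $q=1$ specializations are Jucys--Murphy cellular characters, and these are sums of Calogero--Moser cellular characters.

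Starting from $c_0 = -1/2$ and $c_1 = -r/2$ with $l = 2$ and $\zeta = -1$, the formula $k_i = \frac{1}{l}\sum_{j=1}^{l-1}\zeta^{j(1-i)}c_j$ of \cref{sec:gl1n} yields $k_0 = -c_1/2 = r/4$ and $k_1 = c_1/2 = -r/4$. The integrality hypothesis $lk_i \in \mathbb{Z} + k$ of \cref{sec:comparison} is satisfied by the shift $k = r/2$, which produces the multicharge $\mathbf{k}' = (2k_0 - k,\, 2k_1 - k) = (0,\, -r)$. Assuming $r \geq 0$ (the case $r \leq 0$ following by exchanging the two components of bipartitions), this multicharge satisfies $s_1 \geq s_2$ and $s_1 - s_2 = r$, matching the hypotheses of \cref{thm:LM}.

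Next I would apply \cref{thm:LM} to identify Lusztig's constructible characters at parameter $r$ with the $q = 1$ specializations of the canonical basis of $V_{\mathbf{k}'}$. When $r > 0$, \cref{cor:canonical_is_monomial} (via \cref{prop:simplify_canonical}) writes each canonical basis vector as a quasimonomial vector $F_{j_n}\cdots F_{j_1}\cdot v_{S^0}$ of $\mathcal{F}_{\mathbf{k}'}^n$. Then \cref{thm:JM_monomial} identifies its $q = 1$ specialization with a Jucys--Murphy cellular character at parameter $c$, and \cref{prop:JM_sums_of_CM} concludes that this character is a sum of Calogero--Moser cellular characters.

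The main obstacle is the exceptional case $r = 0$: \cref{cor:canonical_is_monomial}(2) leaves the canonical basis elements of the form $\ket{(\mu,\mu),(0,0)}$, indexed by ``symmetric'' bipartitions, out of reach of the quasimonomial argument. Their $q = 1$ specialization is the single irreducible character $\chi_{(\mu,\mu)}$, and one must argue independently that this is itself a sum of Calogero--Moser cellular characters at $(c_0, c_1) = (-1/2, 0)$. This could be handled by a limiting argument from $r > 0$, exploiting semicontinuity of the Calogero--Moser cellular partition under deformation of the parameter, or by a direct analysis of the Gaudin algebra $\Gau_c(G(2,1,n))$ at this degenerate value, showing that $\chi_{(\mu,\mu)}$ arises as $\gamma_L^{\mathrm{CM}}$ for an appropriate simple module $L$.
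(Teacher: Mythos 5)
Your treatment of the case $r\neq 0$ is, up to a cosmetic difference, exactly the paper's argument: translate $(c_0,c_1)=(-1/2,-r/2)$ into a multicharge with difference $r$ (your shift $k=r/2$ gives $(0,-r)$, the paper's choice $k=-r/2$ gives $(r,0)$; these differ by a global shift and are interchangeable), then chain \cref{thm:LM}, \cref{cor:canonical_is_monomial}, \cref{thm:JM_monomial} and \cref{prop:JM_sums_of_CM}. This part of your proposal is correct and structurally identical to the paper's proof.

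The genuine gap is the case $r=0$, which you correctly isolate but do not prove. Neither of your two suggestions amounts to a proof. No ``semicontinuity of the Calogero--Moser cellular partition under deformation of the parameter'' is established in the paper or in the references it relies on; how Calogero--Moser cells and cellular characters behave under specialization of $c$ is precisely one of the delicate points of the theory, so a limiting argument from $r>0$ cannot be invoked as if it were known. And ``a direct analysis of the Gaudin algebra at this degenerate value'' merely restates the problem: when $c_1=0$ the elements $\mathcal{D}_x$ are supported only on the reflections of $S_0$, and identifying the simple modules of the resulting algebra and their multiplicities in $\mathbb{C}(V)\otimes_{\mathbb{C}} V_{(\mu,\mu)}$ is real work that your proposal does not carry out. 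The paper closes this case by a different and much shorter route, which you should adopt: the $2$-symbol of the bipartition $(\mu,\mu)$ has two equal rows, hence it is the \emph{only} symbol with its underlying multiset of entries; since at these parameters the Calogero--Moser families are determined by the content of the symbol \cite[Theorem 3.9 and 3.13]{Martino}, the character $\chi_{(\mu,\mu)}$ is alone in its Calogero--Moser family, and a character that is alone in its family is itself a Calogero--Moser cellular character by \cite[Proposition 7.7.1 and Theorem 13.5.1]{bonnafe-rouquier}. With that replacement for your last paragraph, the proof is complete.
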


\begin{proof}
  In order to be consistent with the hypothesis of \cref{sec:comparison}, we take $k=-r/2$ so that $(k'_1,k'_2)=(r,0)$.

  If $\gamma$ is a Lusztig's constructible character, then thanks to \cref{thm:LM}, $\gamma$ is obtained from the evaluation at $q=1$ of a vector of the canonical basis of $V_{\mathbf{s}}$.

  If $r\neq0$ \cref{cor:canonical_is_monomial} ensures that $\gamma$ is obtained from the evaluation at $q=1$ of a monomial vector of the Fock space. Therefore, thanks to \cref{thm:JM_monomial}, $\gamma$ is a Jucys--Murphy constructible character, which is a sum of Calogero--Moser $c$-cellular characters, see \cref{prop:JM_sums_of_CM}.

  If $r=0$, then the same argument shows that any Lusztig's constructible character different from $\chi_{(\lambda,\lambda)}$ is a sum of Calogero--Moser $c$-cellular characters. But the character $\chi_{(\lambda,\lambda)}$ is alone in its Calogero--Moser family. Indeed, the two rows of the $2$-symbol $S_\lambda$ corresponding to the bipartition $(\lambda,\lambda)$ are equal: there exists only one symbol with the same content of $S_\lambda$ and the Calogero--Moser family is determined by the content, see \cite[Theorem 3.9 and 3.13]{Martino}. Hence, using \cite[Proposition 7.7.1 and Theorem 13.5.1]{bonnafe-rouquier}, we deduce that $\chi_{(\lambda,\lambda)}$ is also a Calogero--Moser $c$-cellular character.
\end{proof}


\section{Calogero-Moser families and $b$-invariant}
\label{sec:b-invariant}

\subsection{Calogero-Moser families}

Using the representation theory of restricted rational Cherednik algebras at $t=0$, Gordon \cite{gordon-baby} defines the notion of Calogero--Moser families of characters for a complex reflection group $W$. It consists in a partition of the set of irreducible representations for $W$, which, in the case of Weyl group, conjecturally correspond to the notion of Lusztig family. This conjecture is known to hold in the case where $W$ is a classical Weyl group thanks to a case by case analysis.  
 
Now, for each irreducible representation $E$ of $W$, one can define its fake degree $f_E (q)$ which is a polynomial in one indeterminate $q$. This object is connected with the invariant theory of $W$. The valuation of $f_E (q)$ is denoted by $b_{E}$ and is known as the $b$-invariant of $E$.  In \cite{bonnafe-rouquier}, Bonnaf\'e and Rouquier have proved 
 that in each Calogero-Moser family, there is a unique element with  minimal $b$-invariant. Our aim is to recover this result and to give the explicit form of this element in the case where $W=G(l,1,n)$.  

\subsection{The case $W=G(l,1,n)$}

 We first describe the Calogero-Moser family in this case and give an explicit formula for the $b$-invariant of an irreducible character in terms of symbols. Let $l\in \mathbb{N}_{>0}$ and let $\mathbf{s}=(s_1,\ldots,s_l) \in \mathbb{N}^l$. We will suppose that $s_1\geq s_{2} \geq \cdots \geq s_l$ and consider the set of $l$-symbols $\Symb(\mathbf{s})$ of multicharge $\mathbf{s}$. The symbols considered in this section are slightly diffrent from the ones introduces in \cref{sec:monomial-bases}, and we stress the difference of conventions. First, the multicharge is allowed to have only non-negative values. Then, the $i$-th row of an $l$-symbol will be a finite increasing sequence of $s_i$ integers. Infinite symbols as in \cref{sec:monomial-bases} have the advantage to be in bijection with all $l$-partitions of all integers, whereas there are some restriction for finite symbols: we can only recover partitions of integers with the $i$-th component of length at most $s_i$. Therefore, the irreducible representations of $G(l,1,n)$ may be parametrized by $\Symb(\mathbf{s})$ as soon as $s_l \geq n$. In the following, we will need this assuption on $s_l$ every time we want to describe an irreducible representation by its associated symbol. 
We now write a symbol $S\in \Symb(\mathbf{s})$ as follows: 
\[
  S=
  \begin{pmatrix}
    \beta_{1}^1  &   \ldots &    \beta_{s_1-s_2}^1   & \beta_{s_1-s_2+1}^1 & \ldots &  \beta_{s_1-s_l}^1  & \beta_{s_1-s_l+1}^1 &  \ldots & \beta_{s_1}^1  \\
    & & & \beta_{1}^{2}  &  \ldots & \beta_{s_2-s_l}^2   & \beta_{s_2-s_l+1}^2 & \ldots &  \beta_{s_2}^{1}\\
    & & &  &\ldots &\ldots & \ldots & \ldots & \ldots  \\
    & & & & & &  \beta_{1}^l &\ldots   & \beta_{s_l}^l
  \end{pmatrix}.
\]

Given $S\in \Symb(\mathbf{s})$, we define $b(S)\in\mathbb{N}$  and $b'(S)\in\mathbb{N}$ by
\[
  b(S) = \sum_{i=1}^n\sum_{j=1}^{s_i}(l(s_i-j)+i-1)(\beta^i_j-j+1) \quad\text{and}\quad
  b'(S) = \sum_{i=1}^n\sum_{j=1}^{s_i}(l(s_i-j)+i-1)\beta^i_j.
\]
Then $b(S)$ is the $b$-invariant associated with the simple representation parametrized by $S$. This can be deduced from the computation of the fake degrees in \cite[Bemerkung 2.10]{malle-unipotente}. It is clear that for two symbols $S$ and $S'$ associated  with multipartitions with the same length, we have:
\[
b(S)\leq b(S') \iff b'(S)\leq b'(S').
\]
Given a symbol $S\in\Symb(\mathbf{s})$, we associate a sequence $z(S)$ by
\begin{multline*}
  z(S) = (\beta_1^1,\ldots,\beta_{s1-s2}^1,\beta^2_1,\beta^1_{s1-s2+1},\ldots,\beta^2_{s2-s3},\beta^1_{s1-s3},\beta^3_1,\beta^2_{s2-s3+1},\beta^1_{s1-s3+1},\ldots,\\
  \beta^l_1,\beta^{l-1}_1,\ldots,\beta^1_1).
\end{multline*}
This sequence is obtained from the symbol $S$ by reading the entries from bottom to top and from left to right.

Let $W$ be the tuple $(w^1,\ldots,w^l)$ such that $b'(S)=\sum_{i=1}^{s_1+\cdots+s_l}z(W)_iz(S)_i$, that is
\[
  W =
  \begin{pmatrix}
    l(s_1-1) & \ldots & ls_2 & l(s_2-1) & \ldots & ls_l & l(s_l-1) & \ldots & 0\\
    &        &      & l(s_2-1)+1 & \ldots & ls_l+1 & l(s_l-1)+1 & \ldots & 1\\
    &        &      &            & \ldots & \ldots & \ldots & \ldots & \ldots\\
    &   &  &  &  &  & l(s_l-1)+l-1 & \ldots & l-1
  \end{pmatrix}
\]

Note that $z(W)$ is a strictly decreasing sequence (eventhough $W$ in not a symbol, the definition of $z(W)$ still makes sense).

Then the  two symbols $S$ and $T$ in $\Symb(\mathbf{s})$ belong to the same Calogero--Moser family if the underlying multiset is the same; this defines a partition of $\Symb(\mathbf{s})$ into families of symbols.

Fix $\mathcal{F}$ a family of symbols with underlying multiset of entries $\mathcal{E}$. We define a symbol $S_{\mathcal{F}}\in \mathcal{F}$ through the sequence $z(S_{\mathcal{F}})$:
\begin{enumerate}
\item we set $z(S_{\mathcal{F}})_1$ to be the minimum of $\mathcal{E}$,
\item suppose that we have defined $z(S_{\mathcal{F}})_1,\ldots,z(S_{\mathcal{F}})_i$; then we set $z(S_{\mathcal{F}})_{i+1}$ to be the minimal element $x$ of $\mathcal{E}\setminus\{z(S_{\mathcal{F}})_1,\ldots,z(S_{\mathcal{F}})_i\}$ such that there exists a symbol $S\in \mathcal{F}$ with associated sequence $z(S)$ starting with $(z(S_{\mathcal{F}})_1,\ldots,z(S_{\mathcal{F}})_i,x)$.
\end{enumerate}

\begin{ex}
  Here, we take the multicharge $(5,5,3)$ and the family with underlying multiset $\{0,0,0,1,1,2,2,5,7,8,9,11,12\}$. Then we have $z(S_{\mathcal{F}})=(0,0,1,1,0,2,2,5,7,8,9,11,12)$ and the corresponding symbol is 
  \[
    S_{\mathcal{F}}=
    \begin{pmatrix}
      0 & 1 & 2 & 8 & 12\\
      0 & 1 & 2 & 7 & 11\\
      &   & 0 & 5 & 9\\
    \end{pmatrix}.
  \]
\end{ex}

\begin{thm}
  \label{thm:minimal_binv}
  Let $\mathcal{F}\subset \Symb(\mathbf{s})$ be a family of symbols. Then $b(S_{\mathcal{F}})\leq b(S)$ for all $S\in \mathcal{F}$ with equality if and only if $S=S_{\mathcal{F}}$.
\end{thm}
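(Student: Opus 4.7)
The plan is to reduce the minimization of $b(S)$ over $\mathcal{F}$ to the minimization of $b'(S)=\sum_i z(W)_i\, z(S)_i$, since $b(S)-b'(S)$ depends only on $\mathbf{s}$ and is in particular constant on $\mathcal{F}$, so the two functionals have the same ordering (as already noted in the text). The crucial feature is that the weight sequence $z(W)$ is strictly decreasing, which suggests that moving smaller values of $z(S)$ to earlier positions should only decrease the sum. The candidate minimizer $S_\mathcal{F}$ is built greedily by selecting, at each position of the reading, the smallest admissible value compatible with the row-increase constraint and with completability to a symbol in $\mathcal{F}$; I expect this lex-minimal valid reading to be the unique minimizer.

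First I would verify that the greedy construction is well-defined. Proceeding by induction on the step count, the invariant is that the partial reading $(z(S_\mathcal{F})_1,\dots,z(S_\mathcal{F})_i)$ extends to some valid symbol in $\mathcal{F}$. The base case holds since $\mathcal{F}$ is nonempty, and the step is preserved because the greedy always selects a value witnessed by such an extension; the set of admissible values is therefore never empty, so the greedy never stalls.

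The heart of the argument is an exchange. Given $S\neq S_\mathcal{F}$ in $\mathcal{F}$, let $k$ be the first index at which $z(S)$ differs from $z(S_\mathcal{F})$ and set $a=z(S_\mathcal{F})_k < z(S)_k=b$. Since both sequences use the same multiset and agree on positions strictly less than $k$, the value $a$ appears in $z(S)$ at some position $j>k$; moreover $k$ and $j$ must lie in distinct rows, for otherwise the row-increase constraint in $S$ would give $a>b$. When swapping $z(S)_k$ and $z(S)_j$ yields a valid symbol $S'$, one computes $b'(S)-b'(S')=(z(W)_k-z(W)_j)(b-a)>0$ by strict monotonicity of $z(W)$, so $b'(S')<b'(S)$ while $S'$ agrees with $S_\mathcal{F}$ on at least one more position. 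Iterating such swaps produces a sequence of symbols in $\mathcal{F}$ with strictly decreasing $b'$-values, ultimately reaching $S_\mathcal{F}$.

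The main obstacle is the case where this direct swap breaks validity, namely when the next row-$r_j$ entry of $S$ after position $j$ lies in $(a,b]$, so that replacing $a$ by $b$ at position $j$ violates row-increase. I plan to handle this at the partition level: working with the partition of $\mathcal{E}$ into rows induced by $S$, exchange the row-assignments of $a$ and a suitably chosen auxiliary value (often $b$ itself, or else a nearby blocking value in row $r_j$) so that the resulting symbol still agrees with $S_\mathcal{F}$ through position $k$. A case analysis based on the relative positions of $a$, $b$, and the blocking values in the two affected rows should show that such a modification can always be arranged to remain in $\mathcal{F}$ and to yield a strict decrease in $b'$. This refined exchange lemma is the technical core of the proof; once it is established, uniqueness of the minimizer follows immediately from the strictness at each step.
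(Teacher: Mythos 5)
Your proposal follows the same route as the paper's proof: replace $b$ by $b'(S)=\sum_i z(W)_i z(S)_i$, define $S_{\mathcal{F}}$ greedily, and establish minimality by an exchange argument at the first reading position where $z(S)$ and $z(S_{\mathcal{F}})$ differ, with strict decrease of $z(W)$ giving uniqueness. The parts you actually prove are correct: well-definedness of the greedy construction, the fact that the two relevant positions lie in distinct rows, and the computation $b'(S)-b'(S')=(z(W)_k-z(W)_j)(b-a)>0$ when the naive swap happens to produce a valid symbol. The problem is that the case you leave open --- when the swap violates the strict increase along a row --- is not a boundary case to be patched afterwards; it is where the entire difficulty of the theorem sits, and the repair you sketch would not work as stated.

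Exchanging the row assignments of $a$ and \emph{one} auxiliary value cannot suffice in general. Take the row containing $b$ to be $(\dots,5,6,7,\dots)$, with $b=5$ at the disagreement position, and the row containing $a$ to be $(\dots,3,5,6,9,\dots)$, with $a=3$: swapping $3\leftrightarrow 5$ creates the repetition $5,5$; swapping the duplicate $5$ against the $6$ of the first row creates the repetition $6,6$; and so on. One is forced to exchange a whole chain of entries between the two rows (here $\{5,6,7\}$ against $\{3,5,6\}$) and then resort, and one must prove that this cascade terminates. That is precisely the content of the paper's proof: writing the large entry as $\beta^a_p$ and the desired smaller one as $\beta^b_q$, it shows by pigeonhole --- row $S^b$ has only $s_b-q$ entries to the right of $\beta^b_q$, fewer than the $s_a-p+1$ entries of $S^a$ from $\beta^a_p$ onwards --- that some $\beta^a_{p'}$ with $p'\geq p$ does not occur in $S^b$, then exchanges the chain $\beta^a_p,\dots,\beta^a_{p'}$ against the matching entries of $S^b$ and reorders that row; one then checks that the result is a valid symbol of the same family agreeing with $z(S_{\mathcal{F}})$ one position further. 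Note also that your single-swap formula for the decrease of $b'$ does not apply verbatim to such a chain exchange; the paper sidesteps this by introducing the dominance order $\unlhd$ on reading sequences (each elementary exchange moves a smaller entry earlier, so the new symbol $T$ satisfies $T\unlhd S$, and $T\unlhd S$ forces $b'(T)\leq b'(S)$ with equality only if $T=S$). Without the pigeonhole step and the chain exchange, your induction has no proof that the required move exists, so the proposal is incomplete exactly at the point you yourself identify as its technical core.
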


Let $\unlhd$ be the order on $\Symb(\mathbf{s})$ induced by the dominance order on the associated sequences:
\[
  S \unlhd T \text{ if and only if } \forall 1 \leq j \leq s_1+\cdots+s_l, \sum_{i=1}^jz(S) \leq \sum_{i=1}^jz(T).
\]

From the definition of $b'$, it is easy to see that
\[
  b'(S) = \sum_{i=1}^{s_1+\cdots+s_l-1}(z(W)_i-z(W)_{i+1})\sum_{j=1}^iz(S)_j.
\]
Therefore if $S \unlhd T$ then $b'(S)\leq b'(T)$ and if $S\unlhd T$ and $b'(S) = b'(T)$ then $S=T$. Hence Theorem \ref{thm:minimal_binv} will follow immediately from the next proposition.

\begin{prop}
  Let $\mathcal{F}$ be a family of $\Symb(\mathbf{s})$. Then $S_{\mathcal{F}} \unlhd S$ for all $S\in \mathcal{F}$.
\end{prop}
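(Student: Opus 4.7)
The plan is to establish $S_{\mathcal{F}} \unlhd S$ for every $S \in \mathcal{F}$ by an exchange argument: given $S \in \mathcal{F}$ with $S \ne S_{\mathcal{F}}$, I would produce $S' \in \mathcal{F}$ with $S' \unlhd S$ whose reading sequence $z(S')$ agrees with $z(S_{\mathcal{F}})$ on a strictly longer initial segment than $z(S)$ does. Iterating and using transitivity of $\unlhd$ then yields the proposition.

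Let $j_0$ be the smallest index at which $z(S)$ and $z(S_{\mathcal{F}})$ disagree, and set $m = z(S_{\mathcal{F}})_{j_0}$ and $a = z(S)_{j_0}$. The greedy definition of $S_{\mathcal{F}}$, applied to the common prefix of length $j_0 - 1$, forces $m < a$. Since $S$ and $S_{\mathcal{F}}$ share the same multiset of entries and agree up to position $j_0-1$, the value $m$ occurs in $z(S)$ at some position $j_0' > j_0$; let $(r,c)$ be the slot at reading position $j_0$ and $(r',c')$ the slot at position $j_0'$. A quick check gives $r \neq r'$: if $r=r'$ then $a$ and $m$ would occupy the same row at columns $c < c'$ with $a > m$, contradicting strict row increase. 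The natural move is to swap $a$ and $m$ between $(r,c)$ and $(r',c')$, producing a candidate $S'$. Since $j_0 < j_0'$ and $m<a$, every partial sum of the resulting sequence weakly decreases---strictly so for indices in $[j_0, j_0'-1]$---so $S' \unlhd S$ the moment $S'$ is a valid symbol.

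Row $r$ poses no problem: $\beta^r_{c-1}$ agrees with $S_{\mathcal{F}}$ (its reading position lies before $j_0$) and satisfies $\beta^r_{c-1} < m$, while $\beta^r_{c+1}$ in $S$ exceeds $a > m$. The row $r'$ constraint is the main obstacle: placing $a$ at $(r',c')$ requires $a < \beta^{r'}_{c'+1}$, whereas we only know $m < \beta^{r'}_{c'+1}$. When this fails, I would either pick $j_0'$ more carefully---among positions $j > j_0$ at which $z(S)_j \leq a$ and whose slot admits the substitution---or perform a jeu-de-taquin-style cascade along row $r'$, swapping $a$ with the next obstructing entry $b := \beta^{r'}_{c'+1}$ (which satisfies $m < b \leq a$) and iterating forward in row $r'$ until the chain terminates at a valid slot or at the end of the row. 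The existence of $S_{\mathcal{F}}$, which successfully places $m$ at $(r,c)$, should be used to guarantee that such a chain terminates in a valid symbol, and careful bookkeeping then verifies that the net modification preserves the dominance inequality $S' \unlhd S$. Formalising the termination of this cascade and tracking its effect on all partial sums simultaneously is the technical crux of the argument.
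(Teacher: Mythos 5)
Your skeleton coincides with the paper's (greedy first-disagreement index, $m<a$, an exchange producing a valid $T\unlhd S$ agreeing with $z(S_{\mathcal{F}})$ one position further, then induction), and you correctly isolate the crux: the naive transposition of $a$ and $m$ may fail to give a valid symbol. The gap is that neither of your proposed repairs fixes this, because your cascade runs along the wrong row. Sliding $a$ along row $r'$ (the row of $m$) is just an insertion of $a$ into that row, and it is impossible whenever $a$ already occurs in row $r'$ --- a case that genuinely happens and that the existence of $S_{\mathcal{F}}$ does not exclude. Take $l=2$, $\mathbf{s}=(2,2)$, and the family $\mathcal{F}$ with entry multiset $\{0,1,1,2\}$; it consists of exactly two symbols
\[
  S=\begin{pmatrix}0&1\\1&2\end{pmatrix},\qquad
  S_{\mathcal{F}}=\begin{pmatrix}1&2\\0&1\end{pmatrix},
\]
with reading sequences $z(S)=(1,0,2,1)$ and $z(S_{\mathcal{F}})=(0,1,1,2)$. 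Here $j_0=1$, $a=1$ lies in the bottom row, $m=0$ lies in the top row, and the top row also contains a $1$. The transposition makes the top row $(1,1)$; your cascade slides $a=1$ rightward in the top row, collides with the equal entry $1$, and reaches the end of the row without ever finding a valid slot. Your alternative repair (choosing a different landing position $j_0'$) is also unavailable here: the only occurrence of $m=0$ in $z(S)$ is the one that fails, and swapping $a$ with any other value cannot achieve $z(T)_{j_0}=m$, so the induction cannot advance.

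What actually works --- and this is the paper's proof --- is to run the chain along row $r$, the row of $a$: let $a'$ be the first entry of row $r$ weakly to the right of $a$ which does \emph{not} occur in row $r'$. Such an $a'$ exists by counting: all entries of row $r$ weakly right of $a$ exceed $m$, and since the reading position of $m$ is later than that of $a$, row $r'$ has strictly fewer entries to the right of $m$ than row $r$ has weakly right of $a$. Every entry of row $r$ between $a$ and $a'$ occurs in row $r'$, and one exchanges them pairwise with those occurrences; the net effect is that $m$ enters row $r$ at the slot of $a$, while $a'$ leaves row $r$ and enters row $r'$, so both rows remain strictly increasing after reordering row $r'$. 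Each elementary exchange moves a larger value to a later reading position, so every partial sum weakly decreases and $T\unlhd S$ is immediate --- none of the ``simultaneous bookkeeping'' you flag as the crux is needed. In the example above this chain exchanges $1\leftrightarrow 0$ and $2\leftrightarrow 1$ and lands exactly on $S_{\mathcal{F}}$. The missing idea in your proposal is precisely this pivot: instead of forcing $a$ itself into row $r'$, the exchange must expel from row $r$ the first entry that is absent from row $r'$.
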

\begin{proof}
  Let $S\in\mathcal{F}$ different from $S_{\mathcal{F}}$ and consider $1 \leq j \leq s_1+\cdots+s_l$ minimal such that $z(S)_j\neq z(S_{\mathcal{F}})_j$. We construct a symbol $T$ such that $T\unlhd S$ and $z(T)_i = z(S_{\mathcal{F}})_i$ for all $1 \leq i \leq j$. By induction, this implies that $S_{\mathcal{F}}\unlhd T \unlhd S$.

  Let $j < k \leq s_1+\ldots + s_l$ be minimal such that $z(S)_k = z(S_{\mathcal{F}})_j$. We consider the two rows $S^a$ and $S^b$ of $S$ such that $\beta^a_p$ is the entry $z(S)_j$ and $\beta^b_q$ the entry $z(S)_k$. Note that, by definition of $S_{\mathcal{F}}$, we must have $z(S)_j>z(S_{\mathcal{F}})_j$ and $z(S_{\mathcal{F}})_j$ does not appear in $S^a$.

  Since $k>j$, we have $s_b-q+1 \leq s_a-p+1$. Therefore there exists $p' \geq p$ such that $\beta^a_{p'}\neq \beta^b_{r}$ for all $r\geq q$. Indeed, if $\beta^a_{p'}$ belongs to $S^b$ for all $p'\geq p$, then these $s_a-p+1$ different integers all bigger that $z(S)_k=\beta^b_q$ appear at the right of $\beta^p_q$. But there are $s_b-q < s_a-p+1$ integers to the right of $\beta^b_q$.

  Now that we have find an entry in $S^a$ at the right of $z(S)_j$ ($z(S)_j$ included) which does not appear in $S^b$, we now define the symbol $T$ from $S$. Denote by $q_0=q,q_1,\ldots,q_{p'-p-1}$ the integers such that $\beta^a_{p+r} = \beta^b_{q_{r+1}}$ for all $0 \leq r \leq p'-p-1$. 
   Note that $\beta_{q_0}^b=\beta_q^b$ cannot appear in $S^a$ otherwise, it would appear at the left of $\beta_p^a$ and thus it would appear with multiplicity $2$ in row $S_{\mathcal{F}}^a$.

  Let $T$ be the symbol obtained by exchanging $\beta^a_{p+r}$ with $\beta^b_{q_r}$ for $0 \leq r \leq p'-p$ and increasingly reordering the row in position $b$. Since $z(T)$ is obtained from $z(S)$ by a successive exchange of pairs $(x,y)$ with $x>y$ and $x$ appearing before $y$, we have $T\unlhd S$. Moreover, by construction, the $j-1$ first entries of $z(T)$ and $z(S)$ coincide with those of $z(S_{\mathcal{F}})$ and $z(T)_j = z(S_{\mathcal{F}})_j$.  
\end{proof}



\section{Conjectures}
\label{sec:conjectures}

\subsection{Monomial and canonical bases} In section \ref{sec:typeB}, we have seen that every canonical basis elements are in fact monomial vectors in the case where $l=2$. 

We conjecture that this is also the case when $l=3$ even if a closed formula for them might been more difficult to obtain than in the case $l=2$. When $l>2$, the canonical basis elements are not monomial in general, but such counter-examples are not easy to find.

From explicit computer calculations\footnote{A program for the computation of monomial and canonical basis elements is available in \url{http://ftp.math.rwth-aachen.de/homes/CHEVIE/contr/arikidec.g}}, we found the following counter-example for $l=5$ with the multicharge $\mathbf{s}=(3,2,2,1,0)$ and the multipartition $\lambda=((3),(3),(1),\emptyset,\emptyset)$. The corresponding element of the canonical basis $G(\lambda,\mathbf{s})$ is not a monomial vector of the Fock space. 
  
\subsection{Canonical basis and $b$-invariant}

In \cite[Theorem 11.4.2(c)]{bonnafe-rouquier}, Bonnaf\'e and Rouquier have proved that every irreducible constituent of a Calogero--Moser $c$-cellular character belongs to the same Calogero--Moser family. Therefore, one can define the Calogero--Moser family associated to a Calogero--Moser $c$-cellular character. Note that vectors of the standard basis appearing in the expression of the a vector of the canonical basis have a similar behaviour.

As for Calogero--Moser families, Bonnaf\'e and Rouquier have proven a result in relation with the $b$-invariant: every Calogero--Moser $c$-cellular characters have a unique irreducible constituent of minimal $b$-invariant. This result can be thought as an extension of a similar property for Lusztig's constructible characters for Coxeter groups, see \cite{bonnafe-binv}.

Thanks to computer calculations, we conjecture some similar properties for the vectors of the canonical basis.

\begin{conj}
  We fix $\mathbf{s}=(s_1,\ldots,s_l) \in \mathbb{Z}^l$ and consider $\lambda$ a cylindric $l$-partition. We expand $G(\lambda,\mathbf{s})$ along the standard basis:
  \[
    G(\lambda,\mathbf{s}) = \sum_{\mu}a_{\lambda,\mu}(q)\ket{\mu,\mathbf{s}},
  \]
with $a_{\lambda,\mu}(q) \in \mathbb{Z}[q,q^{-1}]$.
  
  Then there exists a unique $\nu$ with minimal $b$-invariant among the $\mu$ such that $a_{\lambda,\mu}(q)\neq 0$.
\end{conj}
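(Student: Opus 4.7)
The plan is to identify $\nu = \lambda$ itself as the unique minimizer, by combining the standard triangularity property of the canonical basis with the positivity argument that underlies \cref{thm:minimal_binv}. Since the defining properties of $G(\lambda,\mathbf{s})$ give
\[
G(\lambda,\mathbf{s}) = \ket{\lambda,\mathbf{s}} + \sum_{\mu\neq\lambda}a_{\lambda,\mu}(q)\ket{\mu,\mathbf{s}},\qquad a_{\lambda,\mu}(q) \in q\mathbb{Z}[q],
\]
we have $a_{\lambda,\lambda}(q)=1$, so it is enough to show that $b(\mu) > b(\lambda)$ whenever $\mu\neq\lambda$ and $a_{\lambda,\mu}(q)\neq 0$.

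The key input I would invoke is that, in Uglov's construction of the canonical basis of the higher-level Fock space, the expansion of $G(\lambda,\mathbf{s})$ is triangular with respect to a combinatorial partial order which refines the dominance order on the $z$-sequences attached to symbols in \cref{sec:b-invariant}. Concretely, after choosing a multicharge $\mathbf{s}'$ large enough that every $\mu$ appearing in $G(\lambda,\mathbf{s})$ is represented by a finite symbol $S_\mu \in \Symb(\mathbf{s}')$, one expects $a_{\lambda,\mu}(q)\neq 0$ to imply $S_\lambda \unlhd S_\mu$ (note that all such $\mu$ already lie in the same Calogero--Moser family as $\lambda$, since they share weight, hence multiset content). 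Granting this triangularity, the strict positivity of the weights $z(W)_i - z(W)_{i+1}$ in the identity
\[
b'(S) = \sum_{i=1}^{s_1+\cdots+s_l-1}\bigl(z(W)_i - z(W)_{i+1}\bigr)\sum_{j=1}^{i}z(S)_j
\]
forces $b'(S_\lambda) < b'(S_\mu)$, and hence $b(\lambda) < b(\mu)$, exactly as in the proof of the proposition preceding \cref{thm:minimal_binv}.

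The main obstacle is establishing this triangularity of the canonical basis with respect to dominance on $z$-sequences for general $l$. For $l=1$ it is classical (Leclerc--Thibon), and for $l=2$ one can extract it from the results of Leclerc--Miyachi used in \cref{thm:LM}; the higher $l$ case is expected but more delicate, as Uglov's order on cylindrical multipartitions intertwines a combinatorial statistic with the multicharge. Two natural strategies suggest themselves: first, using Uglov's embedding of $V_{\mathbf{s}}$ into a level one Fock space for $U_q(\widehat{\mathfrak{sl}}_e)$ with $e$ large, where the canonical basis is unitriangular in the dominance order, and then checking that the unfolding procedure preserves the comparison of $z$-sequences; second, using the Cherednik-algebraic interpretation of $a_{\lambda,\mu}(q)$ as graded decomposition numbers (Rouquier--Shan--Varagnolo--Vasserot) and exploiting grading compatibility with the $b$-invariant. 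In either approach, the delicate point is to align the partial order that drives the canonical basis with the specific dominance order on symbols used in \cref{sec:b-invariant}.
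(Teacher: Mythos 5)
First, a point of order: the statement you were asked to prove is stated in the paper as a \emph{conjecture}, supported only by computer calculations; the paper contains no proof of it, so your proposal can only be judged on its own merits. Unfortunately it contains a fatal error: the identification $\nu=\lambda$ is false, and the triangularity you invoke goes in the wrong direction. Take $l=2$ and $\mathbf{s}=(1,0)$ (equal parameters in type $B_2$). By \cref{thm:LM}, the degree-$2$ canonical basis vectors of $V_{\mathbf{s}}$ specialize at $q=1$ to Lusztig's constructible characters. The middle family of $B_2$ is $\{((1,1),\emptyset),\,((1),(1)),\,(\emptyset,(2))\}$, with $b$-invariants $2,1,2$ respectively, and its two constructible characters are $\chi_{((1,1),\emptyset)}+\chi_{((1),(1))}$ and $\chi_{((1),(1))}+\chi_{(\emptyset,(2))}$: both contain the special character $\chi_{((1),(1))}$ of minimal $b$-invariant. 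The two corresponding canonical basis vectors have \emph{distinct} labels (each label occurs in its own vector with coefficient $1$, by the defining congruence modulo $q\mathcal{F}_{\mathbf{s},R}$), so at least one of them is labelled by $\lambda\in\{((1,1),\emptyset),(\emptyset,(2))\}$ with $b(\lambda)=2$, while it has the constituent $((1),(1))$ with $b$-invariant $1$. Hence for that vector the minimizer is not $\lambda$, and your claimed triangularity $S_\lambda\unlhd S_\mu$ for all constituents $\mu$ cannot hold: the expansion of $G(\lambda,\mathbf{s})$ moves \emph{downward} (toward smaller $b$-invariant, i.e.\ toward the family-minimal symbol), not upward. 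This is also exactly what the paper's subsequent conjecture asserts, namely that $m(\lambda)$ is the minimal symbol $S_{\mathcal{F}}$ of the Calogero--Moser family, which differs from $\lambda$ whenever $\lambda$ is not already that minimal symbol.

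Beyond this, even the corrected triangularity (all constituents $\unlhd\,\lambda$, say) would not prove the conjecture: dominance is only a partial order, so the set of constituents could a priori contain two incomparable symbols realizing the same minimal value of $b'$, and the implication ``$S\unlhd T$ and $b'(S)=b'(T)$ imply $S=T$'' from \cref{sec:b-invariant} does not rule this out. The genuine content of the conjecture is precisely this uniqueness among constituents; the natural route, suggested by the paper's second conjecture, would be to prove that the family-minimal symbol $S_{\mathcal{F}}$ of \cref{thm:minimal_binv} always occurs with $a_{\lambda,S_{\mathcal{F}}}(q)\neq 0$, which would give both existence and uniqueness at once via \cref{thm:minimal_binv} -- but that statement is itself open, and nothing in your argument addresses it. (Your remark that all constituents lie in one Calogero--Moser family, because same weight means same multiset of symbol entries, is correct and is also noted in the paper; it is the only part of the proposal that survives.)
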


If one considers Lusztig's constructible characters at equal parameters, given a Lusztig's family of characters $\mathcal{F}$, every constructible character whose underlying family $\mathcal{F}$ share an irreducible component, which is the character of minimal $b$-invariant in the family $\mathcal{F}$. We also conjecture a similar behavior for the canonical basis, for specific parameters $\mathbf{s}$. Let us denote by $m(\lambda)$ the $l$-partition $\nu$ of the previous conjecture. 

\begin{conj}
  Suppose that the parameter $\mathbf{s}$ is of the following form: $\mathbf{s} = (s+1,\ldots,s+1,s,\ldots,s)$. Let $\lambda$ be a cylindrical $l$-partition. Then $m(\lambda)$ is the symbol of minimal content in the Calogero--Moser family containing $\lambda$.
\end{conj}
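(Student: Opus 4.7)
The plan is to split this conjecture into three linked claims and attack them in order.

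First, I would show that every $l$-partition $\mu$ with $a_{\lambda,\mu}(q)\neq 0$ lies in the Calogero--Moser family $\mathcal{F}$ of $\lambda$, so that the minimisation implicit in $m(\lambda)$ automatically takes place inside a single family. The natural route is combinatorial: by Martino's description used in the proof of \cref{thm:constructible_CM}, two $l$-partitions of $n$ lie in the same Calogero--Moser family if and only if their associated symbols (for the multicharge $\mathbf{s}$) have the same multiset of entries. From the action formulas in \cref{prop:action-fock}, each $F_i$ changes a single entry of a symbol by a fixed amount, so the multiset of entries is rigidly determined by the weight space of $U_q(\mathfrak{sl}_\infty)$ in which a vector sits. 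Since $G(\lambda,\mathbf{s})$ is a weight vector, all $\ket{\mu,\mathbf{s}}$ occurring in its expansion share the same multiset, hence the same Calogero--Moser family.

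Second, \cref{thm:minimal_binv} already identifies the unique element of $\mathcal{F}$ of minimal $b$-invariant, namely $S_{\mathcal{F}}$ built by the greedy algorithm on $z(S_{\mathcal{F}})$. The conjecture is therefore equivalent to showing $a_{\lambda,S_{\mathcal{F}}}(q)\neq 0$, from which one deduces $m(\lambda)=S_{\mathcal{F}}$ by combining this non-vanishing with Conjecture 6.1.

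The third step, establishing $a_{\lambda,S_{\mathcal{F}}}(q)\neq 0$, is where I expect the real difficulty. Here the precise form $\mathbf{s}=(s+1,\ldots,s+1,s,\ldots,s)$ must be used in an essential way: in the $l=2$ case this is exactly the equal-parameter regime in which every Lusztig constructible character in a given family shares its unique minimal-$b$-invariant constituent, by results of Bonnaf\'e in the spirit of \cref{thm:constructible_CM}. Concretely, I would try to exhibit an explicit quasimonomial sequence $(F_{i_n},\ldots,F_{i_1})$ whose divided-power avatar realises $G(\lambda,\mathbf{s})$, analogously to \cref{cor:canonical_is_monomial}, and whose ordinary-power action on $\ket{\emptyset,\mathbf{s}}$ produces $\ket{S_{\mathcal{F}},\mathbf{s}}$ as a summand with positive coefficient. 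The shape of $\mathbf{s}$ enforces that only two distinct row lengths appear in $S_{\mathcal{F}}$, which should severely constrain the greedy rearrangement of \cref{thm:minimal_binv} and make such a ``straightening'' sequence tractable. A more conceptual alternative would be to translate the question into Ariki--Koike decomposition matrices, where the $a_{\lambda,\mu}(1)$ are decomposition numbers and where non-vanishing along suitable chains of symbols is well studied.

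The heart of the matter is thus to bridge the purely combinatorial minimality of $S_{\mathcal{F}}$ produced by \cref{thm:minimal_binv} with the algebraic non-vanishing of $a_{\lambda,S_{\mathcal{F}}}(q)$. The parallel with Bonnaf\'e--Rouquier's theorem that every Calogero--Moser cellular character has a unique constituent of minimal $b$-invariant suggests that this bridge should exist, but making it precise combinatorially — rather than geometrically via Cherednik algebras — is the main obstacle and would almost certainly require a careful analysis of the Uglov/LLT combinatorics specific to the balanced multicharge $(s+1,\ldots,s+1,s,\ldots,s)$.
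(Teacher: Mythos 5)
This statement is one of the paper's open conjectures: the authors give no proof of it at all, only computer evidence, so there is no argument in the paper to measure yours against, and the question is simply whether your proposal closes the conjecture. Its first two steps are sound and worthwhile. The weight-space argument is correct: adding a box of charged content $c$ replaces an entry $c$ of the symbol by $c+1$, so the multiset of symbol entries of $\mu$ is a function of the weight of $\ket{\mu,\mathbf{s}}$; since $G(\lambda,\mathbf{s})$ is a weight vector, every $\mu$ with $a_{\lambda,\mu}(q)\neq 0$ has the same multiset of entries as $\lambda$, hence lies in the same Calogero--Moser family by Martino's theorem (this is exactly the remark the authors make at the start of their discussion of canonical bases and $b$-invariants). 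Combined with \cref{thm:minimal_binv}, which identifies $S_{\mathcal{F}}$ as the \emph{unique} element of minimal $b$-invariant in the family $\mathcal{F}$ of $\lambda$, the conjecture is indeed equivalent to the single non-vanishing statement $a_{\lambda,S_{\mathcal{F}}}(q)\neq 0$ (and proving that would simultaneously settle, for these parameters, the first conjecture of \cref{sec:conjectures}, which the present statement needs in order for $m(\lambda)$ to be defined).

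The genuine gap is your third step: you do not prove $a_{\lambda,S_{\mathcal{F}}}(q)\neq 0$, and you say so yourself. Neither strategy you name can be waved through. The straightening strategy modelled on \cref{cor:canonical_is_monomial} presupposes that canonical basis vectors are (quasi)monomial, which is only a theorem for $l=2$, only a conjecture for $l=3$, and false in general for $l>2$ (the paper's counter-example at $\mathbf{s}=(3,2,2,1,0)$); nothing is established for the balanced multicharge $(s+1,\ldots,s+1,s,\ldots,s)$, and even granting monomiality one would still have to show that the ordinary-power expansion hits $S_{\mathcal{F}}$ with nonzero coefficient, which is the whole problem. The appeal to Bonnaf\'e--Rouquier's theorem that a Calogero--Moser cellular character has a unique constituent of minimal $b$-invariant cannot be invoked either, because the identification of canonical basis specializations with Calogero--Moser cellular characters is itself only the conjecture stated at the end of \cref{sec:comparison}. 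Finally, reinterpreting the $a_{\lambda,\mu}(1)$ as decomposition numbers gives no known criterion forcing non-vanishing at the greedy symbol $S_{\mathcal{F}}$. What you have, then, is a correct and useful reformulation of the conjecture, not a proof of it; the heart of the statement remains open.
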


This conjecture  asserts that the constituent with minimal $b$-invariant is the same if the two vectors of the canonical basis correspond to the same family of symbols.

We finally give a last conjecture,  which corresponds to \cite[Lemma 4.6]{Geck} and \cite[Remarque 5.7]{malle-rouquier}. This conjecture is related with the so-called spetsial parameters. These parameters should be the correct generalization of the equal parameters for Hecke algebras associated with equal parameters, see \cite[Chapter 8]{bonnafe-rouquier} for the correspondance of parameters between rational Cherednik algebras and cyclotomic Hecke algebras. In terms of the multicharge $\mathbf{s}$, these spetsial parameters correspond to the case $\mathbf{s}=(s+1,s,\ldots,s)$. Note that symbols of shape given by $\mathbf{s}$ are considered in \cite{malle-unipotente}, where they parametrize the unipotent degrees of the principal family for the imprimitive complex reflection group $G(l,1,n)$.

Let $s_E$ be the Schur element associated with the representation $E$ of the Ariki--Koike algebra at spetsial parameters. These elements have been explicitly computed, see \cite[Theorem 2.4]{chlouveraki} for a factorization of these Schur elements. The leading coefficient of $s_E$ is denoted by $\xi_E$. Since the Ariki--Koike algebra is a flat deformation of the group algebra of $G(l,1,n)$, the irreducible representations are parametrized by $l$-partitions of $n$, and we write $\xi_{\lambda}$ instead of $\xi_E$ if $E$ is the representation indexed by $\lambda$.

\begin{conj}
  Let $\mathbf{s}$ be $(s+1,s,\ldots,s)$ the spetsial parameters. Let $\lambda$ be a cylindrical $l$-partition. Then
  \[
    a_{\lambda,m(\lambda)}(1) = \sum_{\mu}\frac{a_{\lambda,\mu}(1)}{\xi_{\mu}}.
  \]
\end{conj}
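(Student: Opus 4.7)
The plan is to combine three ingredients: the compatibility of the canonical basis with Calogero--Moser families, Chlouveraki's factorisation of Schur elements at spetsial parameters, and a Fourier-type identity on each family which singles out the character of minimal $b$-invariant. First, I would use the preceding conjecture together with \cite[Theorem~11.4.2(c)]{bonnafe-rouquier} to argue that all $\mu$ for which $a_{\lambda,\mu}(q)\neq 0$ lie in a single Calogero--Moser family $\mathcal{F}$, necessarily containing $m(\lambda)$. The statement then becomes an equality between the coefficient of $m(\lambda)$ and the weighted sum $\sum_{\mu\in\mathcal{F}}a_{\lambda,\mu}(1)/\xi_\mu$, so one may work one family at a time.

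Next, I would import the product formula for the Schur elements of the Ariki--Koike algebra at spetsial parameters from \cite[Theorem~2.4]{chlouveraki}, to obtain explicit expressions for $\xi_\mu$ in terms of the symbol of $\mu$. At spetsial parameters $m(\lambda)$ is precisely the principal symbol of its family in the sense of \cite{malle-unipotente}, and the canonical symmetrizing trace of the Ariki--Koike algebra expands along irreducible characters as $t=\sum_\mu \chi_\mu/s_\mu(q)$. Applying this expansion to the element whose character equals $\sum_\mu a_{\lambda,\mu}(1)\chi_\mu$, rather than to the identity, is the key algebraic move: in the Coxeter case $l=2$ this is essentially the strategy used in \cite[Lemma~4.6]{Geck} and \cite[Remarque~5.7]{malle-rouquier}, and combined with \cref{thm:constructible_CM} (which identifies canonical basis characters and Lusztig constructible characters in type $B$) it should settle the conjecture when $l=2$.

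For $l\geq 3$ the obstacle is that no analogue of Lusztig's Fourier matrix is currently available for families of general $l$-symbols, so the desired Fourier-type identity must be proved by hand. A concrete strategy would be to start from the product expression for $\xi_\mu$ together with the combinatorial description of $m(\lambda)$ given in \cref{thm:minimal_binv}, and to argue by induction on $n$ by removing the box of largest charged content from $m(\lambda)$ and analysing how both sides of the identity transform under the corresponding Kashiwara lowering operator $\tilde{e}_i$. I expect this inductive step to be the hardest part, because the coefficients $a_{\lambda,\mu}(q)$ do not factorise along the branching of $\lambda$ in a naive way: one probably needs a finer device, perhaps a crystal-theoretic incarnation of a Fourier matrix adapted to $l$-symbols, or a global sign-balancing argument on the family. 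In practice, verifying the identity by computer for small $l$ and $n$ would be prudent to pin down the precise shape of the Fourier-type identity needed before attempting a general proof.
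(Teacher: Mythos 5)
There is a fundamental mismatch here: the statement you are trying to prove is stated in the paper as a \emph{conjecture} (the last conjecture of \cref{sec:conjectures}), supported only by computer experiments and by analogy with \cite[Lemma~4.6]{Geck} and \cite[Remarque~5.7]{malle-rouquier}. The paper contains no proof of it, so there is no argument to compare yours against --- and, more importantly, your proposal is not a proof either. It is a research plan, and by your own account it has unresolved gaps at exactly the points where the mathematical content lies.

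Concretely, three steps in your outline are not established. First, you invoke ``the preceding conjecture'' (that $m(\lambda)$ is the symbol of minimal content in the Calogero--Moser family of $\lambda$) as an ingredient; that statement is itself unproven in the paper, so your argument is at best conditional. Second, your ``key algebraic move'' --- expanding the symmetrizing trace $t=\sum_\mu \chi_\mu/s_\mu(q)$ and ``applying this expansion to the element whose character equals $\sum_\mu a_{\lambda,\mu}(1)\chi_\mu$'' --- is not well defined as stated: the $a_{\lambda,\mu}(1)$ are coefficients of a virtual character of the group $G(l,1,n)$ read off from the Fock space, not the character of any specified element of the Ariki--Koike algebra, so there is no element to which one can apply the trace; making this precise is precisely the content of the conjecture, not a reduction of it. Third, for $l\geq 3$ you concede that the needed Fourier-type identity ``must be proved by hand,'' that the coefficients $a_{\lambda,\mu}(q)$ do not factorise along the branching in a naive way, and that one ``probably needs a finer device''; this is an honest description of the open problem, but it means the central step of your proof does not exist. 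Even the $l=2$ case is only argued by analogy (``it should settle the conjecture''), without carrying out the computation that would identify the leading coefficients $\xi_\mu$ of Chlouveraki's Schur elements with the entries of Lusztig's Fourier matrix in the form required. In short: what you have written is a plausible strategy for attacking an open conjecture, not a proof of a theorem, and it should be presented as such.
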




\bibliographystyle{bibliography/habbrv}
\bibliography{bibliography/cm-sum-constructibles}


\end{document}